\newtheorem{theorem}{Theorem}[section]
\newtheorem{lemma}[theorem]{Lemma}
\newtheorem{proposition}[theorem]{Proposition}
\newtheorem{corollary}[theorem]{Corollary} 
\theoremstyle{definition}  
\newtheorem{definition}[theorem]{Definition}
\newtheorem{example}[theorem]{Example}
\newtheorem{conjecture}[theorem]{Conjecture}  
\newtheorem{remark}[theorem]{Remark}
\newcommand{\id}{\text{id}}
\newcommand{\Fun}{\text{Fun}}
\newcommand{\FPdim}{\text{FPdim}} 
\renewcommand{\Vec}{\operatorname{\operatorname{\mathsf{Vec}}}}
\DeclareMathOperator{\Aut}{\operatorname{\mathsf{Aut}}}
\DeclareMathOperator{\End}{\operatorname{\mathsf{End}}}
\DeclareMathOperator{\Rep}{\operatorname{\mathsf{Rep}}}
\DeclareMathOperator{\Hom}{\operatorname{\mathsf{Hom}}}
\newcommand{\rev}{\text{rev}}
\newcommand{\intg}{\text{int}}
\newcommand{\B}{\mathcal{B}}
\newcommand{\C}{\mathcal{C}}
\newcommand{\D}{\mathcal{D}}
\newcommand{\E}{\mathcal{E}}
\newcommand{\Z}{\mathcal{Z}}
\newcommand{\M}{\mathcal{M}}
\newcommand{\A}{\mathcal{A}}
\newcommand{\TY}{\mathcal{T}\mathcal{Y}}
\renewcommand{\O}{\mathcal{O}}
\newcommand{\bt}{\boxtimes}
\newcommand{\ot}{\otimes}
\newcommand{\beq}{\begin{equation}}
\newcommand{\eeq}{\end{equation}}
\newcommand{\bpf}{\begin{proof}}
\newcommand{\epf}{\end{proof}}
\newcommand{\bth}{\begin{theorem}}
\renewcommand{\eth}{\end{theorem}}
\newcommand{\bpr}{\begin{proposition}}
\newcommand{\epr}{\end{proposition}}
\newcommand{\ble}{\begin{lemma}}
\newcommand{\ele}{\end{lemma}}
\newcommand{\bco}{\begin{corollary}}
\newcommand{\eco}{\end{corollary}}
\newcommand{\bde}{\begin{definition}}
\newcommand{\ede}{\end{definition}}
\newcommand{\bex}{\begin{example}}
\newcommand{\eex}{\end{example}}
\newcommand{\bre}{\begin{remark}}
\newcommand{\ere}{\end{remark}}
\newcommand{\bcj}{\begin{conjecture}}
\newcommand{\ecj}{\end{conjecture}}
\begin{document}
\title[Braid group  representations]
{On the braid group  representations  coming from weakly group-theoretical fusion categories}
\author{Jason Green}
\address{Department of Mathematics and Statistics,
University of New Hampshire,  Durham, NH 03824, USA}
\email{jag1032@wildcats.unh.edu}
\author{Dmitri Nikshych}
\address{Department of Mathematics and Statistics,
University of New Hampshire,  Durham, NH 03824, USA}
\email{dmitri.nikshych@unh.edu}

\dedicatory{Dedicated to  Nicol\'as Andruskiewitsch on the occasion of his 60th birthday}

\begin{abstract}
We prove that representations of the braid groups coming from weakly group-theoretical braided
fusion categories have finite images.
\end{abstract}  

\maketitle
\baselineskip=18pt


\section{Introduction}

From  an object $X$ of a braided fusion category  one gets representations of the braid group $B_n$
by automorphisms of the tensor power $X^{\ot n},\ n\geq 1$. A natural problem is to investigate  the images  of  these representations. 
A conjecture of Naidu and Rowell \cite{NR} states that these images  are finite for all $n\geq 1$
if and only if the Frobenius-Perron dimension of $X$ is a square root of an integer.
In  particular, this means that the braid group images are finite for all objects of braided fusion categories
of integral Frobenius-Perron dimension. 

This conjecture is still open, but a number of partial results is known.  It was proved in \cite{ERW} that it is true
for group-theoretical braided fusion categories  (these are precisely fusion subcategories
of the representation categories of twisted group doubles).
In \cite{RW} this conjecture was verified for categories $SO(N)_2$. The latter are examples of  {\em
weakly group-theoretical} braided fusion categories. This class of  fusion categories
was introduced in \cite{ENO3}, see Definition~\ref{wgt def}.  Unlike  the group-theoretical ones, these categories
can contain objects of non-integral Frobenius-Perron dimension.
Braided weakly group-theoretical fusion categories
can be obtained from the category of vector spaces by a sequence of extensions
and equivariantizations by finite groups. Thus, in principle, they can be described in terms of group cohomology.
All known examples of fusion categories of integral dimension (in particular, all representation categories of
known semisimple quasi-Hopf algebras) are weakly group-theoretical. 

In this paper, we extend previously known results by proving that the images of the braid group representations associated 
with objects of a weakly group-theoretical braided fusion category $\B$ are finite (Theorem~\ref{main thm}). The idea of the proof is
to describe restrictions of these representations to certain finite index subgroups of $B_n$ in terms of representations 
coming from a group-theoretical category. Note that $\B$, in general, cannot be embedded into a group-theoretical fusion category. 

The contents of the paper are as follows. 

In Section \ref{sect weakly GT} we recall basic definitions and facts
related to weakly group-theoretical fusion categories \cite{ENO3}.    

In Section \ref{sect G-crossed} we recall  $G$-crossed 
braided fusion categories \cite{T} and their equivariantizations.  
We explain in Corollary~\ref{VecGw Gcrossunity} 
that the structure morphisms of the braided $G$-crossed fusion category $\Vec_G^\alpha$ can be chosen so that its values on simple objects
are roots of unity.  
A particularly useful for us result is 
Proposition~\ref{Natale's} (which is a consequence of \cite{Na})  stating that the center of a  weakly group-theoretical fusion 
category is equivalent to an equivariantization of a braided $G$-crossed fusion category with a pointed trivial component. 

In Section~\ref{sect det} we discuss determinants of certain canonical automorphisms in graded fusion categories. 
We show  in Proposition~\ref{alpha C invariant} that
a certain power of the determinant of the associator between regular homogeneous objects in an integral $G$-graded fusion category $\C$
determines a cohomology class $\alpha^\C \in H^3(G,\, k^\times)$ that is an invariant of $\C$.    As a consequence,
determinants of the crossed braidings between the homogeneous regular objects of $G$ are roots of $1$ (Corollary~\ref{blocks are roots of 1}).
We use these determinants as a technical tool in this paper, but they  seem to be interesting in their own right.

Braid groups and their representations coming from braided fusion categories are recalled in Section~\ref{sect braid}.

The fiber product of braided $G$-crossed fusion categories is discussed in Section~\ref{sect fiber}. 
Proposition~\ref{when Ce pointed} gives a sufficient condition for the ``fiber double" of a $G$-crossed
braided fusion category to be group-theoretical.

Section~\ref{sect finiteness} contains the proof of our main result.  

Finally, in Section~\ref{sect TY} we explicitly describe  the braid group representations coming from the centers
of Tambara-Yamagami categories \cite{GNN, TY}.

{\bf Acknowledgements.} 
We thank Pavel Etingof for many valuable comments on the preliminary version of this work that allowed us, in particular, 
to simplify the proof of the main result.
We are also grateful to  Michael M\"uger, Deepak Naidu, Eric Rowell, and Zhenghan Wang for helpful discussions. 
The work of the second author was supported  by  the  National  Science  Foundation  under  
grant DMS-1801198.


\section{Weakly integral and weakly group-theoretical fusion categories}
\label{sect weakly GT}

Throughout this article we work over an algebraically closed field $k$ of characteristic $0$.  

We refer the reader to \cite{EGNO} for the basic facts about fusion categories. Here  we recall
some terminology and constructions that will be used in this paper.

For a fusion category $\C$ let $\O(\C)$ denote the set of isomorphism classes of simple objects of $\C$.
By a {\em fusion subcategory} of $\C$ we will mean a full tensor subcategory of $\C$.   The smallest
fusion subcategory of $\C$ is generated by the unit object of $\C$ and is equivalent to $\Vec$, the fusion category
of finite-dimensional $k$-vector spaces.

Objects of $\C$   of integer  Frobenius-Perron dimension span a fusion subcategory of $\C$ which we will denote $\C_{\intg}$. 
We say that $\C$ is {\em integral} if $\C_{\intg}=\C$, i.e., if $\FPdim(X) \in \mathbb{Z}$ for all objects $X$ in $\C$.
A fusion category is integral if and only if it is equivalent to the representation category of a semisimple
quasi-Hopf algebra. We say that $\C$ is {\em weakly integral}  \cite{ENO3} if $\FPdim(\C) \in \mathbb{Z}$. 
An example of a weakly integral fusion category that is not integral is an Ising category \cite[Appendix B]{DGNO}.

A {\em grading} of a fusion category $\C$  by a group $G$ is a map $\deg: \O(\C)\to G$ such that for all $X,Y,Z\in \O(\C)$
one has $\deg(X)\deg(Y) =\deg(Z)$ when $Z$ is contained in $X\ot Y$.  In this case we have a decomposition
\begin{equation}
\label{grading}
\C =\bigoplus_{g\in G}\, \C_g,
\end{equation}
where $\C_g$ is the full additive subcategory of $\C$ generated by all objects of degree $g\in G$.  The subcategory $\C_e$
corresponding to the identity element $e\in G$ is a fusion subcategory of $\C$ and is called the {\em trivial component}
of the grading.  A grading is {\em faithful} if the map $\deg: \O(\C)\to G$  is surjective. In this case we say that $\C$
is a {\em $G$-extension} (or, simply, an {\em extension}) of $\C_e$. 

The simplest  example of a  graded fusion category is a {\em pointed} fusion category, i.e., a category $\C$ in which every simple 
object is invertible.  Indeed, in  this case $G=\O(\C)$ has a structure of a group and $\C$ is a $G$-extension of $\Vec$.
Any pointed fusion category  is equivalent to the category $\Vec_G^\alpha$ 
of finite-dimensional $G$-graded vector spaces (for some finite group $G$) with 
the associativity constraint given by a  $3$-cocycle $\alpha \in Z^3(G,\,k^\times)$. 

A fusion category is {\em nilpotent} \cite{GN} if it can be obtained from $\Vec$ by a sequence of extensions. 

\begin{proposition}
\label{GN grading} 
Let $\C$ be a  weakly integral fusion category.  
\begin{enumerate}
\item[(i)] $\FPdim(X)^2 \in \mathbb{Z}$ for any $X \in \O(\C)$. 
\item[(ii)] The map $\deg :\O (\C )\to\mathbb{Q}^{\times}_{>0}/(\mathbb{Q}^{\times}_{>0})^2$
that takes $X\in\O (\C )$ to the image of $\FPdim (X)^2$
in $\mathbb{Q}^{\times}_{>0}/(\mathbb{Q}^{\times}_{>0})^2$ is
a grading of $\C$. 
\end{enumerate}
\end{proposition}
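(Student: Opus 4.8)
\emph{Plan.} I will prove (i) first and deduce (ii) from it by an elementary arithmetic argument. For (i), since $\FPdim(X)$ is an algebraic integer it is enough to show $\FPdim(X)^2\in\mathbb{Q}$; the inputs are that $\FPdim(Y)$ is the Perron--Frobenius eigenvalue of the non-negative integer matrix $N_Y$ of left multiplication by $[Y]$ on the Grothendieck ring, together with the identity $\FPdim(\C)=\sum_{Y\in\O(\C)}\FPdim(Y)^2$. For (ii), once $\FPdim(X)^2\in\mathbb{Z}_{>0}$ one may write $\FPdim(X)=\sqrt{n}$ with $n=\FPdim(X)^2$ and exploit the $\mathbb{Q}$-linear independence of square roots of squarefree integers.

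\emph{Step 1: part (i).} Let $K$ be the finite extension of $\mathbb{Q}$ generated by all $\FPdim(Y)$, $Y\in\O(\C)$, and let $\tau\colon K\hookrightarrow\mathbb{C}$ be any field embedding. For each simple $Y$ the number $\tau(\FPdim(Y))$ is a $\mathbb{Q}$-conjugate of the eigenvalue $\FPdim(Y)$ of the integer matrix $N_Y$, hence itself an eigenvalue of $N_Y$; by Perron--Frobenius, $|\tau(\FPdim(Y))|\le\FPdim(Y)$. Applying $\tau$ to $\FPdim(\C)=\sum_Y\FPdim(Y)^2$ and using that $\tau$ fixes the rational integer $\FPdim(\C)$ gives $\FPdim(\C)=\sum_Y\tau(\FPdim(Y))^2$; taking real parts and using $\mathrm{Re}(z^2)=|z|^2-2(\mathrm{Im}\,z)^2$ yields
\[
\FPdim(\C)=\sum_Y|\tau(\FPdim(Y))|^2-2\sum_Y\bigl(\mathrm{Im}\,\tau(\FPdim(Y))\bigr)^2 .
\]
Since $\sum_Y|\tau(\FPdim(Y))|^2\le\sum_Y\FPdim(Y)^2=\FPdim(\C)$ by the termwise bound, this forces $\mathrm{Im}\,\tau(\FPdim(Y))=0$ for every $Y$. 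Then $\tau(\FPdim(Y))^2=|\tau(\FPdim(Y))|^2\le\FPdim(Y)^2$, and $\sum_Y\tau(\FPdim(Y))^2=\FPdim(\C)=\sum_Y\FPdim(Y)^2$ forces $\tau(\FPdim(Y))^2=\FPdim(Y)^2$ for every $Y$ and every embedding $\tau$. Hence $\FPdim(Y)^2$ has a single $\mathbb{Q}$-conjugate, so $\FPdim(Y)^2\in\mathbb{Q}$; being an algebraic integer, it lies in $\mathbb{Z}$.

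\emph{Step 2: part (ii).} By (i) the assignment is well defined since $\FPdim(X)^2\in\mathbb{Z}_{>0}\subseteq\mathbb{Q}^\times_{>0}$. Write $\FPdim(X)^2=a_X^{\,2}s_X$ with $s_X$ squarefree, so $\FPdim(X)=a_X\sqrt{s_X}$ and $\deg(X)$ is the class of $s_X$ in $\mathbb{Q}^\times_{>0}/(\mathbb{Q}^\times_{>0})^2$. For simple $X,Y$, decompose $X\ot Y=\bigoplus_i N_{XY}^i Z_i$ over simple $Z_i$ and apply the ring homomorphism $\FPdim$:
\[
a\sqrt{s}=\FPdim(X)\FPdim(Y)=\sum_i N_{XY}^i\,a_{Z_i}\sqrt{s_{Z_i}},
\]
where $\FPdim(X)^2\FPdim(Y)^2=a^2 s$ with $s$ squarefree. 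Grouping the right-hand side by squarefree radical and using that square roots of distinct squarefree positive integers are $\mathbb{Q}$-linearly independent, while the coefficients $N_{XY}^i a_{Z_i}$ are non-negative integers with $a_{Z_i}\ge 1$, one concludes that $N_{XY}^i\ne 0$ forces $s_{Z_i}=s$. Equivalently, $\deg(Z_i)=\deg(X)\deg(Y)$ whenever $Z_i$ is contained in $X\ot Y$, which is exactly the condition for $\deg$ to be a grading.

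\emph{Main obstacle.} The delicate point is the rationality in (i): Perron--Frobenius gives only the inequalities $|\tau(\FPdim(Y))|\le\FPdim(Y)$, and the trick is that feeding them into the single scalar identity $\FPdim(\C)=\sum_Y\FPdim(Y)^2$ and separating real and imaginary parts collapses all of these inequalities to equalities simultaneously --- in particular forcing $K$ to be totally real. Once (i) is in hand, (ii) is routine, resting only on the elementary independence of square roots of squarefree integers.
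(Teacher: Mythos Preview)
Your proof is correct. The paper itself does not supply an argument but simply cites \cite[Proposition~8.27]{ENO1} for (i) and \cite[Theorem~3.10]{GN} for (ii); the Galois-conjugate/Perron--Frobenius argument you give for (i) and the square-root independence argument for (ii) are essentially the proofs contained in those references.
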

\begin{proof}
(i) was proved in \cite[Proposition 8.27]{ENO1}  and (ii) was proved in \cite[Theorem 3.10]{GN}.
\end{proof}

\begin{corollary}
\label{dimension grading}
Let $\C$ be a  weakly integral fusion category.  There is a canonical faithful grading
of $\C$ by an elementary Abelian $2$-group $G(\C)$,
\[
\C =\bigoplus_{g\in G(\C)}\, \C_g,
\]
such that $\C_e = \C_{\intg}$ and there are square free integers $N_g$ such that   $\FPdim(X_g) \in \mathbb{Z}\sqrt{N_g}$
for all $X_g \in \C_g,\, g\in G(\C)$. 
\end{corollary}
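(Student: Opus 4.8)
The plan is to read off the claimed grading directly from Proposition~\ref{GN grading}. Recall that $\mathbb{Q}^{\times}_{>0}/(\mathbb{Q}^{\times}_{>0})^2$ is an elementary Abelian $2$-group and that each of its cosets has a unique square-free positive integer representative; for a coset $g$ write $N_g$ for this representative, so that $N_e=1$. Let $\deg$ be the grading map of Proposition~\ref{GN grading}(ii) and let $G(\C)\subseteq \mathbb{Q}^{\times}_{>0}/(\mathbb{Q}^{\times}_{>0})^2$ be the subgroup generated by its image. Since $\O(\C)$ is finite, $G(\C)$ is a finite elementary Abelian $2$-group. For any $g,h$ with $\C_g,\C_h\neq 0$, choosing simple $X\in \C_g$, $Y\in\C_h$ and a simple summand $Z$ of $X\ot Y$ gives $Z\in\C_{gh}$, and $\be$ is a summand of $X\ot X^{*}$ with $X^{*}\in\C_{g^{-1}}=\C_g$; hence the support $\{g\in G(\C):\C_g\neq 0\}$ is a subgroup containing all the generators, so it is all of $G(\C)$ and $\C=\bigoplus_{g\in G(\C)}\C_g$ is a faithful grading.

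Next I would control the Frobenius--Perron dimensions inside each component. Fix $g\in G(\C)$ and a simple object $X$ of $\C_g$. By the definition of $\deg$ the square class of $\FPdim(X)^2$ equals $g$, so $\FPdim(X)^2=N_g\,s^2$ for some $s\in\mathbb{Q}^{\times}_{>0}$, and since $\FPdim(X)>0$ this forces $\FPdim(X)=s\sqrt{N_g}$. To see that $s$ is an integer I would invoke Proposition~\ref{GN grading}(i): $\FPdim(X)^2\in\mathbb{Z}$. Writing $s=a/b$ in lowest terms, $N_g a^2/b^2\in\mathbb{Z}$ together with $\gcd(a,b)=1$ forces $b^2\mid N_g$, whence $b=1$ because $N_g$ is square-free. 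Thus $\FPdim(X)\in\mathbb{Z}\sqrt{N_g}$, and by additivity of $\FPdim$ the same holds for every object of $\C_g$.

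It remains to identify the trivial component. If $X\in\O(\C)$ has $\FPdim(X)\in\mathbb{Z}$, then $\FPdim(X)^2$ is a square in $\mathbb{Q}^{\times}_{>0}$, so $\deg(X)=e$, i.e.\ $X\in\C_e$; conversely every simple object of $\C_e$ has Frobenius--Perron dimension in $\mathbb{Z}\sqrt{N_e}=\mathbb{Z}$ by the previous paragraph. Hence $\C_e$ and $\C_{\intg}$ have the same simple objects and therefore coincide. Finally, the whole construction uses only Frobenius--Perron dimensions, which are determined by the fusion ring of $\C$, so the grading is canonical.

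The only genuine content, everything else being bookkeeping, is the step upgrading ``$\FPdim(X)\in\sqrt{N_g}\,\mathbb{Q}$'' to ``$\FPdim(X)\in\sqrt{N_g}\,\mathbb{Z}$'', which is precisely where square-freeness of $N_g$ is combined with the integrality of $\FPdim(X)^2$ from Proposition~\ref{GN grading}(i); checking that the support of the grading from Proposition~\ref{GN grading}(ii) is an honest finite subgroup is the other point needing a line of care.
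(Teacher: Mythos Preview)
Your proposal is correct and is exactly what the paper intends: the corollary is stated without proof immediately after Proposition~\ref{GN grading}, so it is treated as a direct consequence of that proposition, and your write-up simply fills in the bookkeeping the paper leaves implicit.
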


Recall that a fusion category $\C$ is called {\em group-theoretical} \cite{ENO1} if $\C$ is categorically Morita equivalent 
to a pointed fusion category.  It is known that $\C$ is group-theoretical if and only if its center $\Z(\C)$ is braided equivalent
to the representation category of a twisted group double $\Rep(D^\alpha(G))$ for some finite group $G$ and a $3$-cocycle
$\alpha \in Z^3(G,\,k^\times)$. 

\begin{definition}
\label{wgt def}
A fusion category is {\em weakly group-theoretical} \cite{ENO3} if it is categorically Morita 
equivalent  to a nilpotent fusion category. 
\end{definition}

A weakly group-theoretical fusion category $\C$ is weakly integral.
All known examples of weakly integral fusion categories are weakly group-theoretical. In particular, it was shown in \cite{ENO3} that
a fusion category $\C$ such that $\FPdim(\C)=p^aq^b$, where $p$ and $q$ are primes, 
is weakly group-theoretical.


\section{Braided $G$-crossed fusion categories}
\label{sect G-crossed}

Let $G$ be a finite group. The following notion is due to Turaev.

\begin{definition}
A {\em braided  $G$-crossed fusion category} is a
fusion category $\C$ equipped with  the following structures:
\begin{enumerate}
\item[(i)] an action of $G$ on $\C$, i.e., a collection of    
tensor autoequivalences $g$  of $\C$  along with natural isomorphisms 
\begin{equation}
\label{mu and gamma}
\mu_g(X,Y): g(X)\ot g(Y) \xrightarrow{\sim} g(X\ot Y) \quad \text{and}  \quad \gamma_{g,h}(X): g\circ h(X) \xrightarrow{\sim} gh(X)
\end{equation}
for all  $X,Y\in \C,\, g,h\in G,$ satisfying monoidal functor structure axioms;
\item[(ii)] a (not necessarily faithful) grading $\C=\bigoplus_{g\in G}\C_g$;  
\item[(iii)]  a natural isomorphism 
\begin{equation}
\label{crossed braiding} 
c_{X,Y}: X\ot Y\simeq g(Y)\ot X, \qquad
X\in \C_g,\, Y \in \C,
\end{equation} 
called a {\em $G$-crossed braiding}.
\end{enumerate}
These data should satisfy the following conditions:
\begin{enumerate}
\item[(a)] the diagram
\begin{equation}
\label{G-hex0}
\xymatrix @C=0.6in @R=0.45in{
g(X) \ot g(Y) \ar[rr]^{c_{g(X), g(Y)}} & &
{ghg^{-1}}\circ g(Y) \ot g(X)  \ar[d]^{\gamma_{ghg^{-1},g}(Y) \ot \id_{g(X)}} & \\
g(X\ot Y) \ar[u]^{\mu_g(X,Y)^{-1}}
\ar[d]_{g(c_{X,Y})} &  &  {gh}(Y) \ot g(X) \\
g(h(Y) \ot X) \ar[rr]^{\mu_g(h(Y),X)^{-1}} & &
g\circ h(Y) \ot g(X) \ar[u]_{\gamma_{g,h}(Y) \ot \id_{g(X)}},
}
\end{equation}
commutes for all $g,h \in G$ and objects $X\in \C_h,\, Y\in \C$,
\item[(b)] the diagram
\begin{equation}
\label{G-hex1}
\xymatrix{
& (X\ot Y)\ot Z \ar[dl]_{\alpha_{X,Y,Z}} \ar[dr]^{c_{X,Y}\ot \id_Z} & \\
X\ot (Y\ot Z) \ar[d]_{c_{X,Y\ot Z}} &  & (g(Y)\ot X) \ot Z \ar[d]^{\alpha_{g(Y),X,Z}} \\
g(Y \ot Z) \ot X \ar[d]_{\mu_g(Y,Z)^{-1}\ot \id_X} & & g(Y) \ot (X \ot Z) \ar[d]^{\id_{g(Y)}\ot c_{X,Z}} \\
(g(Y) \ot g(Z)) \ot X  \ar[rr]^{\alpha_{g(Y),g(Z), X}} & & g(Y) \ot (g(Z) \ot X)
}
\end{equation}
commutes for all $g \in G$ and objects $X \in \C_g, Y,Z \in \C,$ and
\item[(c)] the diagram
\begin{equation}
\label{G-hex2}
\xymatrix{
& X\ot (Y\ot Z) \ar[dr]^{\id_X \ot c_{Y,Z}} & \\
(X\ot Y)\ot Z  \ar[ur]^{\alpha_{X,Y,Z}} &  & X\ot (h(Z)\ot Y) \ar[d]^{\alpha^{-1}_{X, h(Z),Y}} \\
{gh}(Z) \ot (X \ot Y)  \ar[u]^{c_{X\ot Y, Z}^{-1}}  & & (X\ot h(Z))\ot Y \ar[d]^{c_{X, h(Z)}\ot \id_Y} \\
g\circ h(Z) \ot (X  \ot Y)  \ar[u]^{\gamma_{g,h}(Z)\ot \id_{X\ot Y}}  \ar[rr]^{\alpha^{-1}_{g\circ h(Z),X, Y}} & & (g\circ h(Z) \ot X)  \ot Y.
}
\end{equation}
commutes for all $g,h \in G$ and objects $X\in \C_g,\,Y\in \C_h, Z \in \C$.
\end{enumerate}
Here $\alpha$ denotes the associativity constraint of $\C$.
\end{definition}

\begin{remark}
The trivial component $\C_e$ of a braided $G$-crossed fusion category $\C$ 
is a braided fusion category and $G$ acts on it by braided autoequivalences. 
\end{remark}

\begin{definition}
We say that a braided $G$-crossed fusion category $\C$ is {\em non-degenerate}  if its grading is faithful 
and $\C_e$ is a non-degenerate braided fusion category.
\end{definition}

Note that $\C$ is a non-degenerate braided $G$-crossed fusion category if and only if $\C^G$ is a non-degenerate
braided fusion category.

\begin{definition}
Let $\C$ and $\C'$ be braided $G$-crossed fusion categories. A {\em braided $G$-crossed tensor functor}
$F: \C \to \C'$  is  a tensor functor preserving the $G$-grading along with a natural isomorphism
of tensor functors 
\begin{equation}
\label{eta g}
\eta_g : F  \circ g \to g  \circ  F,\qquad g\in G,
\end{equation}
such that the  diagrams 
 \begin{equation}
 \label{Gfun1}
 \xymatrix{
 F\circ g\circ h(X) \ar[rrr]^{F(\gamma_{g,h})} \ar[d] _{\eta_g(h(X))} &&&  F\circ {gh}(X)   \ar[dd]^{\eta_{gh}(X)} \\
 g\circ F\circ h(X) \ar[d]_{g(\eta_h(X))} &&& \\
 g\circ h\circ F(X) \ar[rrr]^{\gamma'_{g,h}(F(X))} &&& {gh}\circ F(X),
 }
 \end{equation}
 and
  \begin{equation}
  \label{Gfun2}
 \xymatrix{
 F \circ g(X) \ot F \circ g(Y) \ar[rr]^{\varphi_{g(X),\, g(Y)}} \ar[dd]_{\eta_g(X)\ot \eta_g(Y)} &&
 F(g(X)\ot g(Y)) \ar[rr]^{F((\mu_g)_{X,Y})}&&  F \circ g(X\ot  Y)  \ar[dd]^{\eta_g(X \ot Y)} \\
 \\
 g \circ F(X) \ot g \circ F(Y)  \ar[rr]^{(\mu'_g)_{F(X), F(Y)}} && g(F(X)\ot F(Y)) \ar[rr]^{g(\varphi_{X,Y})} && g \circ F(X \ot Y),
 }
 \end{equation}
   commute for all $g, h\in G$  and $X,Y\in \C$ and the diagram
  \begin{equation}
   \label{Gfun3}
 \xymatrix{
 F(X) \ot F(Y)  \ar[rrr]^{c'_{F(X), F(Y)}} \ar[dd]_{\varphi_{X,Y}} &&& g \circ F(Y) \ot F(X) \ar[d]^{\eta_g(Y)^{-1}\ot \text{id}_{F(X)} }\\
 &&& F  \circ g(Y) \ot F(X) \ar[d]^{\varphi_{g(Y),X}} \\
 F(X \ot Y) \ar[rrr]^{F(c_{X,Y})}  &&& F(g(Y) \ot X)
 }
 \end{equation}
 commutes for all $X \in \C_g,\,g\in G,$  and $Y\in \C$. 
 Here $\varphi_{X,Y}: F(X)\ot F(Y) \xrightarrow{\sim} F(X\ot Y)$ denotes the tensor functor structure of $F$
 and $\gamma,\, \mu,\, c$ (respectively, $\gamma',\, \mu',\, c'$) denote the structure isomorphisms 
 of $\C$ (respectively, $\C'$).  
\end{definition}

\begin{example}
\label{VecGw example}
Let $G$ be a finite group and $\alpha\in Z^3(G,\, k^\times)$ be a $3$-cocycle.  There is a canonical 
braided $G$-crossed category structure on $\Vec_G^\alpha$ defined as follows. 
We view $\Vec_G^\alpha$ as a semisimple Abelian $k$-linear category with simple objects denoted by $x\,(x\in G)$.
The associativity constraint is given by 
\begin{equation}
\label{alpha}
\alpha_{x,y,z}= \alpha(x,y,z) \id_{xyz}: (x\ot y)\ot z \to x \ot (y \ot z).
\end{equation}
The action of $g\in G$ is by $g(x)=gxg^{-1}$ with the tensor functor structure of $g$ given by
\begin{equation}
\label{mu}
\mu_g(y,z) = \frac{\alpha(gyg^{-1}, gzg^{-1}, g)\alpha(g,y,z)}{\alpha(gyg^{-1}, g, z)} \,\id_{gyzg^{-1}}: 
g(y)\ot g(z)\to g(y\ot z),
\end{equation}
the monoidal functor structure on the functor $G\to \Aut(\Vec)$ given by
\begin{equation}
\label{gamma}
\gamma_{g,h}(x) = \frac{\alpha(g, hxh^{-1}, h)}{\alpha(g,h,x)\alpha(ghxh^{-1}g^{-1}, g, h)} \,\id_{ghxh^{-1}g^{-1}}:
g(h(x))\to (gh)(x),
\end{equation}
and the crossed braiding given by
\begin{equation}
\label{c}
c_{g,x} = \id_{gx} : g\ot x \to g(x) \ot g, 
\end{equation}
for all $x,y,z, g,h\in G$. 

That the above maps $\mu$ and $\gamma$ determine an action of $G$ on $\Vec_G^\alpha$
follows from the identities
\begin{eqnarray}
\frac{\mu_g(xy,z)\mu_g(x,y)}{\mu_g(x,yz)\mu_g(y,z)} &=& \frac{\alpha(gxg^{-1}, gyg^{-1}, gzg^{-1})}{\alpha(x,y,z)}, \\
\gamma_{gh, f}(x) \gamma_{g,h}(fxf^{-1})  &=& \gamma_{g,hf}(x) \gamma_{h,f}(x), \\
\frac{\mu_{gh}(x,y)}{\mu_h(x,y)\mu_g(hxh^{-1}, hyh^{-1})} &=&  \frac{\gamma_{g,h}(xy)}{\gamma_{g,h}(x)\gamma_{g,h}(y)},
\end{eqnarray}
for all $x,y,z,f,g,h\in G$. 
The diagram \eqref{G-hex0} commutes thanks to the identity
\begin{equation}
\frac{\mu_g(x,y)}{\mu_g(xyx^{-1},\, x)} = \frac{\gamma_{gxg^{-1}, g}(y)}{\gamma_{g,x}(y)}, 
\end{equation}
for all $g,x,y\in G$,
while the diagrams \eqref{G-hex1} and \eqref{G-hex2} are the definitions of $\mu$ and $\gamma$. 
The above identities are consequences the $3$-cocycle equation for $\alpha$ (and are well known). 
\end{example}

\begin{proposition}
\label{VecGw Gcrossequiv}
The braided $G$-crossed category structure $\mu,\, \gamma,\, c$  
on $\Vec_G^\alpha$ defined by formulas \eqref{mu}, \eqref{gamma}, and \eqref{c} in Example~\ref{VecGw example}
is unique up to a braided $G$-crossed  equivalence. 
\end{proposition}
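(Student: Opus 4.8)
The plan is to compare two braided $G$-crossed structures on $\Vec_G^\alpha$ by tracking how they differ as cochains, and to show this difference can be absorbed into a braided $G$-crossed equivalence (which is, underneath, the identity functor on the category $\Vec_G^\alpha$ together with nontrivial tensor and crossing isomorphisms). First I would fix the underlying fusion category: any braided $G$-crossed structure in question has $\Vec_G^\alpha$ as its underlying fusion category, so the associator is $\alpha$, and the $G$-grading is the canonical one by $\deg(x) = x$. The action of $g \in G$ must send $x$ to an object of the same dimension, hence a simple object, and compatibility of the crossed braiding $c_{g,x}\colon g\ot x \to g(x)\ot g$ with the grading forces $g(x)$ to have degree $gxg^{-1}$; so on objects the action is necessarily $x \mapsto gxg^{-1}$. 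Thus the only freedom is in the scalars $\mu_g(x,y)$, $\gamma_{g,h}(x)$, $c_{g,x}$, which I would record as functions $G^2\to k^\times$ (for each $g$), $G^3 \to k^\times$, and $G^2\to k^\times$ respectively.

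Next I would write out the constraints. The monoidal-functor axioms for each $g$, the compatibility of the $\gamma$'s, and the $G$-crossed hexagons \eqref{G-hex0}, \eqref{G-hex1}, \eqref{G-hex2} become a system of multiplicative identities on these scalar functions — identical in form to the identities displayed in Example~\ref{VecGw example} except that the right-hand sides involving $\alpha$ are the same (since $\alpha$ is fixed). Subtracting the "standard solution" of Example~\ref{VecGw example}, the ratios
\[
\mu'_g(x,y)/\mu_g(x,y), \qquad \gamma'_{g,h}(x)/\gamma_{g,h}(x), \qquad c'_{g,x}/c_{g,x}
\]
satisfy the corresponding \emph{homogeneous} system (all $\alpha$-terms cancel). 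I would then recognize this homogeneous system as exactly the cocycle conditions for a normalized cochain in the relevant (co)homology controlling braided $G$-crossed structures on a fixed pointed category — equivalently, the data of a braided $G$-crossed \emph{autoequivalence} of $\Vec_G^\alpha$ lying over the identity functor. Concretely, a braided $G$-crossed tensor functor $F = \id$ with tensor structure $\varphi_{x,y} = \nu(x,y)\,\id_{xy}$ and crossing isomorphisms $\eta_g(x) = \rho_g(x)\,\id_{gxg^{-1}}$ transports $(\mu,\gamma,c)$ to $(\mu',\gamma',c')$ precisely when the coboundary relations coming from \eqref{Gfun1}, \eqref{Gfun2}, \eqref{Gfun3} are satisfied; matching these against the homogeneous system above shows that every solution of the homogeneous system is such a coboundary, after possibly first adjusting $F$ by a (braided $G$-crossed) autoequivalence in the image of $H^2(G,\widehat{A})$-type data. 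This gives the desired equivalence.

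The main obstacle I anticipate is bookkeeping: organizing the scalar data and the hexagon/functor-axiom diagrams into a clean enough cohomological statement that "every homogeneous solution is a coboundary" is transparent, rather than a morass of multiplicative identities. A convenient route around this is to invoke the known classification — braided $G$-crossed extensions of a braided fusion category $\D$ are classified by the data of \cite{ENO3} (an action $G \to \underline{\Aut}^{br}(\D)$, together with $O(G,\D)$-obstruction data valued in $H^2$ and $H^3$ of $G$) — applied to $\D = \Vec$, where $\underline{\Aut}^{br}(\Vec)$ is trivial: then the crossed extension is determined up to equivalence by the cohomology class $[\alpha] \in H^3(G,k^\times)$ alone, so any two structures with the same $\alpha$ (indeed with cohomologous $\alpha$) are braided $G$-crossed equivalent. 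I would present the proof in this second, shorter form, citing the classification and remarking that the explicit formulas of Example~\ref{VecGw example} realize one representative, so the uniqueness up to equivalence is immediate.
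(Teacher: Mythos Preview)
Your approach is correct but considerably more elaborate than the paper's. Both of your routes work: the cochain-tracking argument does eventually produce the equivalence, and the classification of braided $G$-crossed extensions of $\Vec$ (which are indeed governed by $H^3(G,k^\times)$ alone) settles the matter in one stroke. But the paper's proof is a two-line explicit construction that avoids both the bookkeeping you anticipate and the appeal to heavy machinery. Given another structure $(\mu',\gamma',c')$ on the same $\Vec_G^\alpha$, the paper takes the \emph{identity tensor functor} with \emph{trivial} tensor structure $\varphi_{x,y}=\id_{xy}$ and sets
\[
\eta_g(x)\;=\;\frac{c'_{g,x}}{c_{g,x}}\,\id_{gxg^{-1}}.
\]
Then diagram~\eqref{Gfun3} holds by construction, and dividing the hexagons \eqref{G-hex1} and \eqref{G-hex2} for the two structures (the $\alpha$-terms cancel) yields exactly
\[
\frac{\mu'_g(x,y)}{\mu_g(x,y)}=\frac{\eta_g(xy)}{\eta_g(x)\eta_g(y)},\qquad
\frac{\gamma'_{g,h}(x)}{\gamma_{g,h}(x)}=\frac{\eta_{gh}(x)}{\eta_g(hxh^{-1})\eta_h(x)},
\]
which are precisely \eqref{Gfun2} and \eqref{Gfun1}. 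Done.

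The point you did not exploit is that in the reference structure the crossed braiding is $c_{g,x}=\id_{gx}$, so $\mu$ and $\gamma$ are \emph{forced} by \eqref{G-hex1} and \eqref{G-hex2} once $c$ is fixed; hence the ratio $c'/c$ alone carries all the discrepancy, and no nontrivial $\varphi$ or auxiliary $H^2$-adjustment is needed. Your cohomological framing and the classification citation both obscure this: they buy generality you do not need here, at the cost of the short, self-contained argument the situation allows.
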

\begin{proof}
If $\mu',\, \gamma',\, c'$ is another braided $G$-crossed category structure on $\Vec_G^\alpha$ 
then the identity tensor functor $\id_{\Vec_G^\alpha}$ (with the tensor structure  $\varphi_{x,y}=\id_{xy}$) 
equipped with the natural isomorphism 
\begin{equation}
\label{etagx}
\eta_g(x)= \frac{c'_{g,x}}{c_{g,x}}\, \id_{gxg^{-1}} : g(x)\to g(x)
\end{equation}
establishes an equivalence between these braided $G$-crossed categories.  Indeed, comparing diagrams
\eqref{G-hex1} and \eqref{G-hex2} for them we get
\[
\frac{\gamma'_{g,h}(x)}{\gamma_{g,h}(x)} = \frac{\eta_{gh}(x)}{\eta_g(hxh^{-1})\eta_h(x)}
\quad \text{and} \quad
\frac{\mu'_g(x,y)}{\mu_g(x,y)} = \frac{\eta_g(xy)}{\eta_g(x)\eta_g(y)}
\]
for all $x,y,g,h\in G$, which gives commutativity of \eqref{Gfun1} and \eqref{Gfun2}.  The commutativity of 
\eqref{Gfun3} is immediate from the definition \eqref{etagx} of $\eta$. 
\end{proof}

The following result is well known. We include its proof for the sake of completeness.

\begin{lemma}
\label{rootof1lemma}
Let $G$ be a finite group.  Let $\alpha \in  Z^{n}(G,\,k^\times)$ be an $n$-cocycle, $n\geq 1$.
Then there exists an $n$-cocycle $\alpha'$  cohomologous to $\alpha$  such that  the values of $\alpha'$
are $|G|$th roots of unity.
\end{lemma}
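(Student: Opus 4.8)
The plan is to use the standard averaging / transgression argument for cocycles valued in a divisible abelian group, specialized to the fact that $k^\times$ contains all roots of unity. First I would observe that any $n$-cocycle $\alpha \in Z^n(G,k^\times)$ takes values that are $|G|$th roots of unity after modification: the key point is that for each fixed tuple, raising the cocycle identity to the $|G|$th power and using that the coboundary of the ``averaging'' cochain cancels the non-root part. Concretely, I would proceed by downward induction or by a direct transgression formula.

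\emph{Step 1 (reduce to killing the ``winding'' part).} Consider the composite $|G|\cdot(-)\colon k^\times \to k^\times$, $t \mapsto t^{|G|}$, and more usefully the short exact sequence $1 \to \mu_{|G|} \to k^\times \xrightarrow{(-)^{|G|}} k^\times \to 1$ (surjectivity since $k$ is algebraically closed), where $\mu_{|G|}$ is the group of $|G|$th roots of unity. This induces a long exact sequence in group cohomology, and I want to show the connecting-type map detects exactly the obstruction. Equivalently, it suffices to show the image of $\alpha$ under $H^n(G,k^\times) \to H^n(G, k^\times/\mu_{|G|})$ is trivial, i.e. that $\alpha$ becomes a coboundary modulo $\mu_{|G|}$-valued cochains.

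\emph{Step 2 (the averaging cochain).} Here is the concrete construction I would write out. Since $k^\times$ is divisible, pick for each $g$ an element; more precisely, define a cochain $\beta \in C^{n-1}(G, k^\times)$ by a standard averaging formula involving summing the exponents of $\alpha$ over translates by all elements of $G$ (using that $k^\times$ is uniquely divisible on the prime-to-characteristic part, which in characteristic $0$ means divisible). The point is the classical lemma: for any finite group $G$ and any $\mathbb{Z}[G]$-module $M$ that is \emph{divisible}, $H^n(G,M)$ is annihilated by $|G|$ (restriction–corestriction to the trivial subgroup), hence for $\alpha \in Z^n(G,k^\times)$ one has $\alpha^{|G|} = d\beta$ for some $\beta \in C^{n-1}(G,k^\times)$. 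Now choose an $|G|$th root $\beta^{1/|G|}$ of $\beta$ entrywise (possible since $k$ is algebraically closed), and set $\alpha' := \alpha \cdot d(\beta^{1/|G|})^{-1}$. Then $\alpha'$ is cohomologous to $\alpha$ and $(\alpha')^{|G|} = \alpha^{|G|} \cdot (d\beta)^{-1} = 1$, so every value of $\alpha'$ is an $|G|$th root of unity, as desired.

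\emph{Main obstacle.} The only real content is the classical fact that $|G|$ annihilates $H^n(G,M)$, proved via the transfer (corestriction $\circ$ restriction $= $ multiplication by the index), together with the extraction of $|G|$th roots, which is immediate from $k = \bar k$; everything else is bookkeeping. So the proof is short: invoke $\alpha^{|G|} = d\beta$, take entrywise roots, and rescale. I would present it in exactly that order.
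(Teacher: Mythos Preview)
Your proposal is correct and is essentially the same argument as the paper's, just packaged more abstractly. The paper writes out the cocycle identity in $n+1$ variables, multiplies over the first variable $g_0\in G$, and notices that the result reads $(\alpha\cdot d(r))^{|G|}=1$ for the explicit cochain $r$ with $r(g_1,\dots,g_{n-1})^{-|G|}=\prod_{g_0}\alpha(g_0,g_1,\dots,g_{n-1})$; this $r$ is precisely your $\beta^{1/|G|}$ for the transfer cochain $\beta$, so the two proofs coincide once unwound. One small remark: the fact that $|G|$ annihilates $H^n(G,M)$ holds for \emph{any} $G$-module $M$ (no divisibility needed)---divisibility of $k^\times$ enters only when you extract the entrywise $|G|$th root of $\beta$.
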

\begin{proof}
Let  $g_0,g_1, \ldots, g_n \in G$. The $n$-cocycle condition for $\alpha$ is
\[
\alpha(g_1, \ldots, g_n)\alpha(g_0g_1, \ldots, g_n)^{-1}
\cdots \alpha(g_0, \ldots, g_{n-1}g_n)^{(-1)^n}\alpha(g_0, \ldots, g_n)^{(-1)^{n+1}}
= 1.
\]
Taking a product over $g_0\in G$ we get
\begin{equation}
\label{deepak's}
\prod_{g_0 \in G}\alpha(g_1, \ldots, g_n)\alpha(g_0g_1, \ldots, g_n)^{-1}
\cdots \alpha(g_0, \ldots, g_{n-1}g_n)^{(-1)^n}\alpha(g_0, \ldots, g_n)^{(-1)^{n+1}}
= 1.
\end{equation}

Let $m=|G|$. Choose a function  $r: G^{n-1} \to k^\times$ such that 
\[
r(g_1, \ldots, g_{n-1})^{-m} =
\displaystyle \prod_{g_0 \in G} \alpha(g_0, g_1, \ldots, g_{n-1}).
\]
Then \eqref{deepak's} can be rewritten as
\[
\alpha(g_1, \ldots, g_n)^m 
r(g_2, \cdots, g_n)^{m}
\cdots r(g_1, \ldots, g_{n-1}g_n)^{(-1)^{n} m} r(g_1, \ldots, g_{n-1})^{(-1)^{n+1} m} =1.
\]
Take $\alpha' := \alpha d(r)$, then $\alpha'(g_1, \ldots, g_n)^m=1$.
%
\end{proof}

\begin{corollary}
\label{VecGw Gcrossunity}
The braided $G$-crossed fusion category $\Vec_G^\alpha$ is equivalent to one in which all scalars
\eqref{alpha}, \eqref{mu}, \eqref{gamma}, and \eqref{c} corresponding to  the structure maps are $|G|$th roots of unity in $k$.
\end{corollary}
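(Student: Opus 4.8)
The plan is to combine Proposition~\ref{VecGw Gcrossequiv} with Lemma~\ref{rootof1lemma} by first replacing $\alpha$ with a cohomologous cocycle whose values are roots of unity, and then observing that the canonical braided $G$-crossed structure attached to such a cocycle has all structure scalars equal to roots of unity. Concretely: by Lemma~\ref{rootof1lemma} applied with $n=3$, there is a $3$-cocycle $\alpha'$ cohomologous to $\alpha$ whose values lie in the group $\mu_{|G|}$ of $|G|$th roots of unity. Since $\alpha$ and $\alpha'$ represent the same class in $H^3(G,k^\times)$, the pointed fusion categories $\Vec_G^\alpha$ and $\Vec_G^{\alpha'}$ are tensor equivalent, and by Proposition~\ref{VecGw Gcrossequiv} the braided $G$-crossed structure is determined by the tensor category up to braided $G$-crossed equivalence; so it suffices to exhibit one braided $G$-crossed structure on $\Vec_G^{\alpha'}$ with the desired property.

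The second step is the computational heart: take the \emph{canonical} structure maps $\mu,\gamma,c$ for $\Vec_G^{\alpha'}$ given by the explicit formulas \eqref{alpha}, \eqref{mu}, \eqref{gamma}, \eqref{c} of Example~\ref{VecGw example}, but now with $\alpha'$ in place of $\alpha$. I would simply read off the scalars. The associativity scalar \eqref{alpha} is $\alpha'(x,y,z)\in\mu_{|G|}$ by construction. The crossed braiding scalar \eqref{c} is $1$, hence trivially a root of unity. The scalars \eqref{mu} and \eqref{gamma} are quotients of products of values of $\alpha'$ — three values of $\alpha'$ in the numerator/denominator in each case — so each is a ratio of elements of $\mu_{|G|}$ and therefore again lies in $\mu_{|G|}$. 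Thus every structure scalar of this particular braided $G$-crossed structure on $\Vec_G^{\alpha'}$ is an $|G|$th root of unity, and transporting along the braided $G$-crossed equivalence $\Vec_G^\alpha\simeq\Vec_G^{\alpha'}$ (which exists by the two facts above) proves the claim.

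One subtlety to address carefully is matching the equivalence provided by Proposition~\ref{VecGw Gcrossequiv} to this situation: that proposition is stated for two braided $G$-crossed structures on \emph{the same} category $\Vec_G^\alpha$, whereas here I am comparing structures on $\Vec_G^\alpha$ and $\Vec_G^{\alpha'}$. I would handle this by first fixing a tensor equivalence $\Vec_G^\alpha\xrightarrow{\sim}\Vec_G^{\alpha'}$ coming from the cohomology $\alpha'=\alpha\,d(r)$ for some $2$-cochain $r$ (this is the standard equivalence sending $x\mapsto x$ with tensor structure $\varphi_{x,y}=r(x,y)^{-1}\id_{xy}$, or its inverse), transport the canonical root-of-unity structure back to $\Vec_G^\alpha$ along this equivalence, and \emph{then} invoke Proposition~\ref{VecGw Gcrossequiv} to compare it with the original canonical structure on $\Vec_G^\alpha$; the resulting braided $G$-crossed equivalence, being an equivalence of the underlying categories, does not change the property of the structure scalars being roots of unity once we phrase the statement as ``$\Vec_G^\alpha$ is \emph{equivalent} to one with this property.'' The only real work beyond bookkeeping is the verification in the second step, which is immediate from inspection of formulas \eqref{alpha}--\eqref{c}, so I expect no serious obstacle — this corollary is essentially a direct consequence of the preceding lemma and proposition.
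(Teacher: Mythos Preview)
Your proposal is correct and follows essentially the same approach as the paper: replace $\alpha$ by a cohomologous cocycle with values in $\mu_{|G|}$ via Lemma~\ref{rootof1lemma}, then observe that the canonical structure scalars \eqref{mu}, \eqref{gamma}, \eqref{c} are products of values of $\alpha$ and their inverses. The paper's proof is two sentences and leaves implicit the passage from $\Vec_G^\alpha$ to $\Vec_G^{\alpha'}$ that you spell out carefully (including the subtlety about Proposition~\ref{VecGw Gcrossequiv} being stated for a fixed underlying category); your extra care is justified but not a different argument.
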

\begin{proof}
By Lemma~\ref{rootof1lemma},   we can assume that values of $\alpha$
are $|G|$th roots of $1$. 
The result follows since
the values of $\mu$ and $\gamma$ in  \eqref{mu}, \eqref{gamma} are products of values of $\alpha$
and its inverses. 
\end{proof}

It was shown in \cite{K, M} (see also \cite[Appendix 5]{T} and \cite[Section 4.4.3]{DGNO})
that the {\em equivariantization}  construction $\C \mapsto \C^G$
gives rise to a  $2$-equivalence between the $2$-category
of braided $G$-crossed fusion categories and the $2$-category 
of braided fusion categories containing $\Rep(G)$. 

The inverse to the equivariantization construction is called {\em de-equivariantization}. It proceeds as follows.
Let $\B$ be a braided fusion category containing  a Tannakian fusion subcategory $\E=\Rep(G)$. The algebra
$\Fun(G)$ of $k$-valued functions on $G$ is a commutative algebra in $\E$ (and, hence, in $\B$). The category 
$\B_G$  of $\Fun(G)$-modules in $\B$ has a canonical structure   of a braided $G$-crossed fusion category.

One has canonical equivalences $(\C^G)_G \cong \C$ of braided $G$-crossed fusion categories and 
$(\B_G)^G \cong \B$ of braided fusion categories. 

Given a braided $G$-crossed fusion category $\C$ with the $G$-crossed braiding $c$  
the braiding  $\tilde{c}$ on $\C^G$ is defined as follows. 
Let $X$ and $Y$  be
objects in $\C^G$. Let $\{v_g^Y: g(Y) \to Y\}_{g\in G}$ denote the $G$-equivariant structure on $Y$
and let  $X =\oplus_{g\in G}\,X_g$ be the decomposition
of $X$ with respect to the grading of $\C$. Set
\begin{equation}
\label{braiding from equiv}
\tilde c_{X,Y}: X\ot Y =\bigoplus_{g\in G}\, X_g\ot Y \xrightarrow{\displaystyle \oplus\,c_{X_g,Y}}
\bigoplus_{g\in G}\, g(Y) \ot X_g \xrightarrow{\displaystyle \oplus\, v_g^Y \ot \id_{X_g}}
\bigoplus_{g\in G}\, Y \ot X_g = Y\ot X.
\end{equation}

Let  $F_\C: \C^G \to \C$ be the tensor functor forgetting the $G$-equivariant structure.  Its right adjoint
$I_\C: \C \to \C^G$  is the induction that can be explicitly described as follows: 
\begin{equation}
\label{ICX}
I_\C(X)=\bigoplus_{g\in G} \, g(X) 
\end{equation}
for any $X\in \C$ with the $G$-equivariant structure given by
\[
v_h^{I_\C(X)}:  h(I_\C(X)) = \bigoplus_{g\in G} \, h\circ g(X)  \xrightarrow{\displaystyle \oplus \gamma_{h,g}(X)}
\bigoplus_{g\in G} \, hg(X) = I_\C(X),\qquad h\in G.
\] 

For  $X\in \C_x,\, x\in G,$ and  $Y\in \C$ the braiding between 
the induced objects $I_\C(X)$ and $I_\C(Y)$  is given by
\begin{multline}
\label{ICXICY}
\tilde{c}_{I_\C(X), I_\C(Y) }:  I_\C(X) \ot I_\C(Y) = \bigoplus_{g,h\in G}\, g(X) \ot h(Y)  \xrightarrow{\displaystyle \oplus_{} c_{g(X),h(Y)}} 
 \bigoplus_{g,h\in G}\,  gxg^{-1}\circ h(Y) \ot g(X) \\
  \xrightarrow{ \displaystyle \oplus_{g}\, v_{gxg^{-1}}^{I_\C(Y)}\ot \text{id}_{g(X)}}   \bigoplus_{g,h\in G}\,  h(Y) \ot g(X)  = I_\C(Y) \ot I_\C(X).
\end{multline}


Let $\C$ be a braided $G$-crossed fusion category. The notion of the {\em reverse} category of $\C$
was considered in \cite{S}. This braided $G$-crossed category
$\C^\rev$ is defined as follows. As an Abelian category, $\C^\rev=\C$ with the same action of $G$. For $X\in\C_g$, $Y\in\C$, 
the tensor product in $\C^\rev$ is $X\ot^\rev Y= X\ot g^{-1}(Y)$ with obvious associativity and unit constraints. 
The $G$-grading on $\C^\rev$ is given by $(\C^\rev)_g=\C_{g^{-1}}$. The $G$-crossed braiding is given by
\begin{equation*}
c^\rev_{X,Y}=c^{-1}_{g^{-1}(Y),g^{-1}h^{-1}g(X)}: X\ot^\rev Y = X \ot g^{-1}(Y) 
\to g^{-1}(Y) \ot g^{-1}h^{-1}g(X) = g^{-1}(Y) \ot^\rev X
\end{equation*}
for all $X\in\C_g$, $Y\in\C_h$.    

In the special case when $G$ is trivial the above notion coincides with that of the reverse of a braided fusion category. 

\begin{proposition}
\label{Crev}
There is a canonical braided equivalence
 $(\C^\rev)^{G}\cong (\C^G)^\rev$ or, equivalently, a braided $G$-crossed equivalence $\C^\rev\cong((\C^G)^\rev)_G$. 
\end{proposition}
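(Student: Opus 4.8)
The plan is to construct the claimed braided equivalence $(\C^\rev)^{G} \cong (\C^G)^\rev$ directly, using the explicit formula \eqref{braiding from equiv} for the braiding obtained from equivariantization on both sides and checking that the reversal operations match up. First I would observe that $\C^\rev$ and $\C$ coincide as $k$-linear categories with the same $G$-action, so their equivariantizations $(\C^\rev)^G$ and $\C^G$ coincide as $k$-linear categories, each equipped with the same underlying monoidal structure only up to the tensor-product twist $X \ot^\rev Y = X \ot g^{-1}(Y)$ for $X \in \C_g$. On the side $(\C^G)^\rev$, the reverse of a braided fusion category keeps the same underlying monoidal category but replaces the braiding $\tilde c$ by $\tilde c^{-1}$ (with arguments swapped). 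So the underlying functor of the desired equivalence is the identity (or the obvious comparison functor identifying the two monoidal categories), and the whole content is the compatibility of the braidings.

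The key computation is then the following: take $X, Y \in \C^G$, decompose $X = \bigoplus_{g \in G} X_g$ according to the $\C$-grading, and write down $\tilde c^{\,\rev}_{X,Y}$ as computed from the $G$-crossed braiding $c^\rev$ of $\C^\rev$ via formula \eqref{braiding from equiv}, using that $v^Y_g$ is the same equivariant structure in both categories. Unwinding the definition $c^\rev_{X,Y} = c^{-1}_{g^{-1}(Y),\, g^{-1}h^{-1}g(X)}$ (for $X \in \C_g$, $Y \in \C_h$) and summing over the grading, one should recognize the result, after inserting the equivariant structure maps $v^Y_g$ and using the compatibility axiom relating $v$ to $\gamma$, as exactly $(\tilde c_{Y,X})^{-1}$ — i.e. the braiding of $(\C^G)^\rev$. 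The monoidal-functor structure morphisms of the identity functor are the canonical isomorphisms identifying $\ot^\rev$ with the reversed tensor product; that these are monoidal and that the hexagon axiom for a braided tensor functor reduces to the $G$-crossed hexagons \eqref{G-hex0}–\eqref{G-hex2} is a routine diagram chase, which I would state but not grind through.

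For the ``equivalently'' clause, I would invoke the de-equivariantization $\leftrightarrow$ equivariantization $2$-equivalence recalled just before the statement: applying de-equivariantization to the braided equivalence $(\C^\rev)^G \cong (\C^G)^\rev$ and using the canonical equivalences $(\D^G)_G \cong \D$ gives $\C^\rev \cong ((\C^G)^\rev)_G$ as braided $G$-crossed fusion categories, and conversely. One small point to be careful about here is that $(\C^G)^\rev$ does contain $\Rep(G)$ (the reversal does not affect the symmetric subcategory $\Rep(G)$, since reversal of a symmetric category is itself), so de-equivariantization along $\Rep(G)$ is legitimate on the right-hand side.

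\textbf{Main obstacle.} I expect the only real work — and the place where sign/inverse bookkeeping can go wrong — is the braiding computation in the second paragraph: correctly tracking how the grading decomposition, the $G$-action on the ``moved'' object, and the equivariant structure maps $v^Y_g$ interact when one composes the reversed crossed braiding $c^\rev$ through formula \eqref{braiding from equiv}, and verifying that this genuinely yields $\tilde c^{-1}$ with swapped arguments rather than some twisted variant. Everything else (identifying underlying categories, the monoidal structure of the comparison functor, the ``equivalently'' clause) is formal given the $2$-equivalence and the explicit formulas already in the excerpt.
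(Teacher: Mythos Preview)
Your proposal is correct and follows essentially the same approach as the paper's proof. Both take the identity functor between the common underlying abelian category $\C^G$, equip it with a tensor structure to pass from $\ot^\rev$ to $\ot$, and then verify the braiding compatibility; the paper is in fact even more terse than you are and simply writes down the tensor structure
\[
X\ot^\rev Y = \bigoplus_{g\in G} X_g \ot g^{-1}(Y) \xrightarrow{\ \oplus_g\, \id_{X_g}\ot v_{g^{-1}}^Y\ } \bigoplus_{g\in G} X_g \ot Y = X\ot Y
\]
(precisely the ``canonical isomorphism'' you allude to) and states that the rest is a direct check. The one place your write-up is slightly looser than the paper is that you call this tensor structure ``the canonical isomorphisms'' without naming it; since it is exactly this map $v_{g^{-1}}^Y$ that makes the braiding comparison go through, you should record it explicitly before the braiding computation rather than treat it as purely formal.
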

\begin{proof}
As  Abelian $k$-linear categories  $(\C^\rev)^G$ and $(\C^G)^\rev$ are equal to $\C^G$. 
We define a tensor  functor $F: (\C^\rev)^G \to (\C^G)^\rev$  as the identity functor equipped
with the tensor structure
\begin{equation}
\label{tensor structure}
X\ot^\rev Y = \bigoplus_{g\in G}\, X_g \ot g^{-1}(Y)  \xrightarrow{\displaystyle \oplus_g \text{id}_{X_{g}}\ot v_{g^{-1}}^Y}  
\bigoplus_{g\in G}\, X_g \ot Y = X\ot Y
\end{equation}
for all $X,\,Y \in \C^G$, where $X=\oplus_{g\in G}\,X_g$ with $X_g\in \C_g$.

Using the definition of the tensor products of $G$-equivariant objects and naturality of the associativity
and braiding constraints one can check directly that $F$ is a braided tensor equivalence.
\end{proof}

\begin{proposition}
\label{Natale's}
Let $\A$ be a weakly group-theoretical fusion category.
There is a braided $G$-crossed fusion category 
\[
\C=\bigoplus_{g\in G}\, \C_g
\] 
such that the trivial component $\C_e$ is pointed and $\Z(\A) \cong \C^G$. 
\end{proposition}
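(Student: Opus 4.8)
The plan is to reduce the statement to a structural fact about centers of weakly group-theoretical categories together with the de-equivariantization correspondence recalled in Section~\ref{sect G-crossed}. First I would recall that $\A$ is weakly group-theoretical means $\A$ is categorically Morita equivalent to a nilpotent fusion category; since Morita equivalent categories have braided equivalent centers, $\Z(\A)$ is braided equivalent to the center of a nilpotent fusion category. A nilpotent fusion category is, by \cite{GN}, obtained from $\Vec$ by iterated extensions, and in particular its center is a nondegenerate braided fusion category containing a large pointed (in fact Tannakian) subcategory; the precise input I would cite is the result of Natale \cite{Na} that the center of a weakly group-theoretical category admits a Tannakian subcategory $\Rep(G)$ whose de-equivariantization is pointed in its trivial component. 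Concretely, $\Z(\A)$ is nondegenerate, hence so is the corresponding braided $G$-crossed category; the content of \cite{Na} is that $G$ can be chosen so that the trivial component of the de-equivariantization is pointed.

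The key steps, in order, are: (1) Observe $\Z(\A) \simeq \Z(\N)$ as braided fusion categories for some nilpotent $\N$, using Morita invariance of the center. (2) Invoke Natale's theorem \cite{Na} to find a Tannakian subcategory $\E = \Rep(G) \subseteq \Z(\A)$ such that the de-equivariantization $\Z(\A)_G$ has pointed trivial component. The natural choice of $G$ comes from the nilpotency filtration: an extension by an abelian-like tower produces a chain of Tannakian subcategories, and one takes $\E$ maximal (or the one adapted to the universal grading) so that the trivial component collapses to a pointed category. (3) Set $\C := \Z(\A)_G$, which by the $K$--M\"uger correspondence \cite{K, M} recalled above carries a canonical braided $G$-crossed structure, with grading $\C = \bigoplus_{g \in G} \C_g$, and satisfies $(\Z(\A)_G)^G \cong \Z(\A)$, i.e. $\C^G \cong \Z(\A)$. (4) Note $\C_e = \Z(\A)_G{}_{,\,e}$ is exactly the trivial component, which by the choice in step (2) is pointed. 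This gives the required $\C$.

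I would emphasize that the real mathematical content is entirely concentrated in step (2): the existence of a Tannakian $\Rep(G)$ inside $\Z(\A)$ whose de-equivariantization has \emph{pointed} (not merely nilpotent, or weakly integral) trivial component. The rest is bookkeeping with the equivariantization/de-equivariantization $2$-equivalence. For step (2) the argument I would give, following \cite{Na}, is inductive on the length of a chain $\A = \A_0 \supseteq \A_1 \supseteq \cdots$ realizing the weakly group-theoretical structure: at each stage one peels off a cyclic-group extension or equivariantization, tracking how it enlarges the Tannakian subcategory of the center and shrinks the trivial component of the de-equivariantized picture, until nothing but invertible objects remain in degree $e$. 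The base case is a pointed category, whose center $\Z(\Vec_H^\omega) = \Rep(D^\omega(H))$ contains $\Rep(H)$ with pointed de-equivariantization $\Vec_H^\omega$ itself.

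The main obstacle, and the step where care is needed, is precisely making the inductive bookkeeping in step (2) consistent: one must ensure that the Tannakian subcategories produced at successive stages are compatible (nested, or at least generate a single Tannakian $\Rep(G)$), that each de-equivariantization step preserves nondegeneracy, and that the trivial components genuinely become pointed in the limit rather than merely remaining nilpotent. Since the full argument is exactly the content of \cite{Na}, I would present this Proposition as a direct consequence of that work, spelling out only the translation from Natale's formulation to the braided $G$-crossed language used here (via \cite{K, M}) and verifying that ``pointed trivial component'' on our side corresponds to Natale's hypothesis.
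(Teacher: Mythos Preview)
Your proposal has the right scaffolding (choose a maximal Tannakian $\Rep(G)\subset\Z(\A)$, de-equivariantize, identify the trivial component as the core), but there is a genuine gap in step (2). Natale's result in \cite{Na} does \emph{not} say that the core of a weakly group-theoretical braided fusion category is pointed; it says the core is either pointed or of the form $\mathcal{P}\boxtimes\mathcal{I}$ with $\mathcal{P}$ pointed and $\mathcal{I}$ an Ising braided category. You have silently assumed the second case does not occur, and your inductive sketch gives no mechanism to exclude it. Indeed, for a general weakly group-theoretical braided fusion category the Ising factor can and does appear, so you must use something specific to $\Z(\A)$ beyond weak group-theoreticity.

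The paper's proof supplies exactly this missing step: since $\Z(\A)$ is a Drinfeld center, its multiplicative central charge is $\xi(\Z(\A))=1$, and the central charge is invariant under passing to the core, so $\xi(\C_e)=1$. But $\xi(\mathcal{I})$ is a primitive $16$th root of unity while $\xi(\mathcal{P})$ is an $8$th root of unity, so $\xi(\mathcal{P}\boxtimes\mathcal{I})=\xi(\mathcal{P})\xi(\mathcal{I})\neq 1$, ruling out the Ising alternative. You should add this argument (or an equivalent Witt-triviality argument) to make your step (2) correct; as written the proposal is incomplete precisely at the point you flag as ``the real mathematical content.''
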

\begin{proof}
Let $\E=\Rep(G)$ be a maximal Tannakian subcategory of $\Z(\A)$.  The corresponding de-equivariantization $\C=\Z(\A)_G$
is a braided $G$-crossed fusion category and $\Z(\A)\cong \C^G$. The trivial component $\C_e$ is called the {\em core} of $\Z(\A)$ and its braided equivalence class
is  independent of the choice of $\E$ \cite{DGNO}. We claim that $\C_e$ is pointed.

Note that $\Z(\A)$ is weakly group-theoretical by \cite{ENO3}.
It was shown in \cite{Na} that the core of a weakly group-theoretical braided fusion category is either pointed or is the Deligne product of a pointed braided fusion
category and an Ising braided fusion category. Thus, $\C_e$  must have one of these forms.  Let $\xi(\M)\in k^\times$  denote the multiplicative central
charge of a modular category $\M$ \cite[Section 6.2]{DGNO}.  Note that $\Z(\A)$ is non-degenerate and weakly integral and, hence, is modular 
(with respect to the canonical spherical  structure on the integral category $\Z(\A)$ \cite{ENO1}). Since the central charge is invariant 
under taking the core,  we have $\xi(\C_e) =\xi(\Z(\A))=1$. This implies that an equivalence  $\C_e\cong \mathcal{P}\bt \mathcal{I}$,
where  $\mathcal{P}$ is pointed and  $\mathcal{I}$ is an Ising category,  is impossible. Indeed, $\xi(\mathcal{P})$ is an $8$th root of $1$ while
$\xi(\mathcal{I})$ is a primitive $16$th root of $1$, so that $\xi(\C_e) = \xi(\mathcal{P}) \xi(\mathcal{I}) \neq 1$. Therefore, $\C_e$ is pointed.
\end{proof}


\section{Determinants in graded fusion categories}
\label{sect det}

Let $\C$ be an integral  fusion category.  
For any object $X$ in $\C$ let $d(X)$ denote the Frobenius-Perron dimension of $X$. 
Given an automorphism $\phi:X \to X$ its {\em determinant} is
\[
\det(\phi) = \prod_{Z\in \O(C)}\,  \det (\phi|_{\Hom_\C(Z,\, X)})^{d(Z)} \in k^\times.
\]

\begin{remark}
Determinants can be defined for morphisms in an arbitrary (i.e., not necessarily integral) fusion category. In general, they take values
in $\mathbb{A}\ot_\mathbb{Z} k^\times$, where $\mathbb{A}$ is the ring of algebraic integers in $\mathbb{R}$.
\end{remark}

Determinants have the following familiar properties \cite[Proposition 2.1]{E}.  

\begin{proposition}
\label{pasha's}
For all objects $X,\,Y$ in $\C$,  automorphisms $\phi,\psi: X \to X$,  $\zeta:Y\to Y$, and  
$\lambda \in k^\times$ we have
\begin{enumerate}
\item[(i)] $\det(\phi\circ \psi)=\det(\phi) \det(\psi)$,
\item[(ii)] $\det(\phi\oplus \zeta) =\det(\phi)\det(\zeta)$,
\item[(iii)] $\det(\lambda \id_X) =\lambda^{d(X)}$, 
\item[(iv)] $\det( \phi\ot \id_Y) =\det(\id_Y \ot \phi) = \det(\phi)^{d(Y)}$.
\end{enumerate}
\end{proposition}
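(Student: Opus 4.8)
The plan is to reduce every part to the corresponding statement for ordinary determinants of linear endomorphisms of the finite-dimensional spaces $\Hom_\C(Z,X)$, $Z\in\O(\C)$, combined with the two standard numerical facts about the Frobenius-Perron dimension in an integral fusion category: additivity $d(X)=\sum_{Z\in\O(\C)}\dim\Hom_\C(Z,X)\,d(Z)$ and multiplicativity $d(X\ot Y)=d(X)d(Y)$.

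For (i), note that $(\phi\circ\psi)|_{\Hom_\C(Z,X)}=\phi|_{\Hom_\C(Z,X)}\circ\psi|_{\Hom_\C(Z,X)}$, so multiplicativity of the ordinary determinant, followed by taking the product of $d(Z)$-th powers over $Z\in\O(\C)$, gives the claim. For (ii), use $\Hom_\C(Z,X\oplus Y)=\Hom_\C(Z,X)\oplus\Hom_\C(Z,Y)$, under which $\phi\oplus\zeta$ acts block-diagonally, and the determinant of a block-diagonal endomorphism is the product of the determinants of the blocks; again take $d(Z)$-th powers and multiply over $Z$. For (iii), $(\lambda\,\id_X)|_{\Hom_\C(Z,X)}=\lambda\,\id$ has ordinary determinant $\lambda^{\dim\Hom_\C(Z,X)}$, so $\det(\lambda\,\id_X)=\prod_{Z}\lambda^{d(Z)\dim\Hom_\C(Z,X)}=\lambda^{d(X)}$ by the additivity identity.

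The substantive point is (iv). First, by (ii) and invariance of $\det$ under transport along an isomorphism of the tensored object, it suffices to treat $Y=W$ with $W\in\O(\C)$: writing $Y\cong\bigoplus_W W^{\oplus\dim\Hom_\C(W,Y)}$ reduces $\det(\phi\ot\id_Y)$ to a product of the $\det(\phi\ot\id_W)^{\dim\Hom_\C(W,Y)}$, and $\sum_W\dim\Hom_\C(W,Y)\,d(W)=d(Y)$. Now fix $W$ simple. For each $Z\in\O(\C)$ the duality isomorphism $\Hom_\C(Z,X\ot W)\cong\Hom_\C(Z\ot W^*,X)$ is natural in $X$ and intertwines post-composition by $\phi\ot\id_W$ with post-composition by $\phi$. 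Decomposing $Z\ot W^*\cong\bigoplus_{U\in\O(\C)}U^{\oplus m^Z_U}$ with $m^Z_U=\dim\Hom_\C(U,Z\ot W^*)=\dim\Hom_\C(U\ot W,Z)$, the endomorphism $\phi_*$ of $\Hom_\C(Z\ot W^*,X)\cong\bigoplus_U\Hom_\C(U,X)^{\oplus m^Z_U}$ is block-diagonal, so
\[
\det\bigl((\phi\ot\id_W)|_{\Hom_\C(Z,X\ot W)}\bigr)=\prod_{U\in\O(\C)}\det\bigl(\phi|_{\Hom_\C(U,X)}\bigr)^{m^Z_U}.
\]
Raising to the $d(Z)$-th power and multiplying over $Z$, the exponent of $\det(\phi|_{\Hom_\C(U,X)})$ becomes $\sum_Z m^Z_U\,d(Z)=\sum_Z\dim\Hom_\C(U\ot W,Z)\,d(Z)=d(U\ot W)=d(U)d(W)$, whence $\det(\phi\ot\id_W)=\prod_U\det(\phi|_{\Hom_\C(U,X)})^{d(U)d(W)}=\det(\phi)^{d(W)}$. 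The equality $\det(\id_Y\ot\phi)=\det(\phi)^{d(Y)}$ follows verbatim using $\Hom_\C(Z,W\ot X)\cong\Hom_\C(W^*\ot Z,X)$.

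I expect the only real obstacle to be the bookkeeping in (iv): verifying that the duality adjunction genuinely intertwines the two post-composition actions, and tracking the multiplicities $m^Z_U$ carefully enough to apply the identity $\sum_Z\dim\Hom_\C(U\ot W,Z)\,d(Z)=d(U)d(W)$. Parts (i)--(iii) and the reduction to simple $W$ are immediate from the definition of $\det$ and elementary linear algebra.
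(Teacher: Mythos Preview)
Your proof is correct. The paper does not actually give its own proof of this proposition: it simply records the statement with a citation to \cite[Proposition~2.1]{E} and moves on. So there is nothing to compare against beyond noting that you have supplied a self-contained argument where the paper defers to the literature.

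Your approach is the natural one and matches what one would expect from Etingof's original argument: parts (i)--(iii) are immediate from the definition and the identity $d(X)=\sum_Z d(Z)\dim\Hom_\C(Z,X)$, while (iv) is handled by the duality adjunction $\Hom_\C(Z,X\ot W)\cong\Hom_\C(Z\ot W^*,X)$ (natural in $X$, hence intertwining post-composition by $\phi\ot\id_W$ with post-composition by $\phi$) together with the fusion-coefficient identity $\sum_Z \dim\Hom_\C(U\ot W,Z)\,d(Z)=d(U)d(W)$. The reduction to simple $W$ via (ii) and the block-diagonal bookkeeping are both clean. One minor remark: you might state explicitly that ``invariance of $\det$ under transport along an isomorphism'' is itself an instance of (i), since conjugating by an isomorphism $g:A\to B$ amounts to composing with $g$ and $g^{-1}$, whose contributions cancel on each $\Hom$-space.
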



Let $G$ be a finite group and let 
\[
\C=\bigoplus_{g\in G}\, \C_g
\] 
be an integral $G$-graded fusion category. Let $D=\FPdim(\C_e)$. 

Let $R_g =\displaystyle\oplus_{X\in \O(\C_g)}\, d(X)\,X$ be the regular object in $\C_g$. 
We have  $d(R_g)=D$ and
\begin{equation}
\label{RfRg}
{R}_f \ot {R}_g =D \,{R}_{fg}.
\end{equation}

We may and will assume that the abelian category $\C_{reg}$ generated by $R_g,\,g\in G,$ is {\em skeletal}, i.e., that isomorphic
objects of $\C_{reg}$ are equal. 

Let $g_1,\dots, g_n,\,h_1,\dots, h_n\in G$ be  such that $g_1\cdots g_n = h_1\cdots h_n$
and $d_{g_1}\cdots d_{g_n}= d_{h_1}\cdots d_{h_n}$. Any isomorphism
\[
\phi_{R_{g_1},\dots, R_{g_n}}: R_{g_1}\ot \cdots \ot R_{g_n} \to R_{h_1}\ot \cdots \ot R_{h_n}
\]
is identified with an automorphism of $ D^{n-1} R_{g_1\cdots g_n}$

Let $\alpha_{X,Y,Z}: (X\ot Y)\ot Z \to X \ot (Y \ot Z)$ denote the associativity constraint in $\C$. 

\begin{proposition}
\label{alpha C invariant}
The function 
\begin{equation}
\label{alphaCdef}
{\alpha}^\C(f,g,h)= \det({\alpha}_{{R}_f, {R}_g,{R}_h})^D
\end{equation}
is a $3$-cocycle on $G$ with values in $k^\times$.
Its  class  in $H^3(G,\, k^\times)$ is an invariant of the $G$-graded fusion category $\C$.
That is, if $\C$ is a 
$G$-graded fusion category equivalent to $\C$ by a grading preserving tensor equivalence then ${\alpha}^\C$
and ${\alpha}^{\C'}$ are cohomologous.
\end{proposition}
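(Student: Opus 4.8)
The plan is to verify the $3$-cocycle identity directly using the pentagon axiom for $\C$ and the multiplicativity of determinants, and then to establish the invariance under grading-preserving equivalences by tracking how the associator $\alpha_{R_f,R_g,R_h}$ transforms, using that equivalences act on the regular objects of each component by an invertible scalar matrix whose determinant contributes a coboundary.

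\emph{Step 1: The cocycle identity.} Fix $f,g,h,k\in G$. The pentagon axiom for $\C$ applied to the quadruple $(R_f,R_g,R_h,R_k)$ is a commuting pentagon of isomorphisms between fivefold tensor products, each of which (after fixing a skeletal model of $\C_{reg}$) is literally the object $D^3 R_{fghk}$. Taking $\det(-)$ of the pentagon and using Proposition~\ref{pasha's}(i) (multiplicativity under composition) and (iv) (behavior under tensoring with an object on either side), I get a relation among the five determinants $\det(\alpha_{R_f,R_g,R_h})$, etc., each raised to a power equal to the Frobenius-Perron dimension of the ``spectator'' object, which is always $D$. So after raising everything to the $D$-th power once more — i.e. working with $\alpha^\C(f,g,h)=\det(\alpha_{R_f,R_g,R_h})^D$ — the pentagon gives exactly
\[
\alpha^\C(g,h,k)\,\alpha^\C(f,gh,k)\,\alpha^\C(f,g,h) = \alpha^\C(fg,h,k)\,\alpha^\C(f,g,hk),
\]
which is the (multiplicative) $3$-cocycle condition. (Normalization: choosing the unit-object identifications so that $R_e$ is a strict unit makes $\alpha^\C$ normalized, or one simply notes the cohomology class is unaffected.) The one subtlety to spell out is that tensoring the associator $\alpha_{R_f,R_g,R_h}$ with $\id_{R_k}$ on the right, or with $\id_{R_f}$ on the left, multiplies the determinant by $\det(\cdot)^{d(R_k)}=\det(\cdot)^D$ by Proposition~\ref{pasha's}(iv); this is where the exponent $D$ in the definition is consumed, so the final identity is clean.

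\emph{Step 2: Invariance under equivalence.} Let $F:\C\to\C'$ be a grading-preserving tensor equivalence with tensor structure $\varphi_{X,Y}:F(X)\ot F(Y)\xrightarrow{\sim}F(X\ot Y)$. Since $F$ preserves the grading, $F$ restricts to an equivalence $\C_g\xrightarrow{\sim}\C'_g$ and hence sends the regular object $R_g$ to an object isomorphic to $R'_g$; fix isomorphisms $\psi_g:F(R_g)\xrightarrow{\sim}R'_g$ in the skeletal model, which are automorphisms of $D\cdot(\text{point})$-type objects and so have well-defined determinants $\det(\psi_g)$. The tensor-functor hexagon (compatibility of $\varphi$ with associativity) relates $\alpha'_{F(R_f),F(R_g),F(R_h)}$ to $F(\alpha_{R_f,R_g,R_h})$ conjugated by the $\varphi$'s; transporting along $\psi_g$ and using that $F$ preserves determinants of automorphisms of regular objects (it is an equivalence, so it acts on each $\Hom_\C(Z,R_g)$ bijectively, matching dimensions), taking $\det(-)^D$ shows that $\alpha^{\C'}(f,g,h)/\alpha^\C(f,g,h)$ equals a product of $\det$'s of the transition data arranged exactly as $d(\beta)(f,g,h)$ for the $2$-cochain $\beta(f,g)=\det(\text{transition isomorphism for }R_f\ot R_g)^{D}$ (or $\det(\varphi_{R_f,R_g}\psi_{fg}^{-1}\cdots)^{?}$, to be pinned down). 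Hence $\alpha^{\C'}$ and $\alpha^\C$ differ by a coboundary.

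\emph{Main obstacle.} The genuine bookkeeping difficulty is Step 2: one must identify precisely which $2$-cochain's coboundary appears, i.e. carefully assemble the determinants of $\varphi_{R_f,R_g}$, of $\psi_g$, and of $F$ applied to the associator into the shape $d(\beta)$. The powers of $D$ must be tracked so that everything lands in $k^\times$ rather than in $\mathbb{A}\ot_{\mathbb{Z}}k^\times$, and one has to use Proposition~\ref{pasha's}(iv) repeatedly to convert $\det(\text{something}\ot\id_{R_k})$ into $\det(\text{something})^D$ in exactly the right places. I expect the cleanest route is to first reduce to the case where $F$ is the identity on objects of $\C_{reg}$ (absorbing $\psi_g$), so that the only remaining data is the scalar $\varphi_{R_f,R_g}$ viewed as an automorphism of $D\,R_{fg}$, and then the hexagon for $(\varphi,\alpha)$ becomes, after $\det(-)^D$, literally $\alpha^{\C'}/\alpha^{\C}=d(\beta)$ with $\beta(f,g)=\det(\varphi_{R_f,R_g})^{D}$. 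The cocycle identity in Step 1 is, by contrast, a routine consequence of pentagon plus Proposition~\ref{pasha's}.
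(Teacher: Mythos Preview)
Your approach is essentially identical to the paper's: apply $\det(-)$ to the pentagon for the cocycle condition, and to the tensor-functor compatibility hexagon for invariance, using Proposition~\ref{pasha's} throughout. One small bookkeeping correction in Step~2: after taking determinants of the hexagon and then raising to the $D$-th power (so that $\det(\alpha)$ becomes $\alpha^\C$), the $2$-cochain that emerges is $\beta(f,g)=\det(\varphi_{R_f,R_g})^{D^2}$, not $\det(\varphi_{R_f,R_g})^{D}$ --- each $\varphi$-factor already acquires one exponent $D$ from the spectator (via $R_g\ot R_h = D\,R_{gh}$ or via tensoring with $\id_{R_\bullet}$), and then a second from the global $D$-th power. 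The isomorphisms $\psi_g$ you introduce are harmless; the paper simply suppresses them by identifying $F(R_g)$ with $R'_g$ in the skeletal model.
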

\begin{proof}
The $3$-cocycle condition for $\alpha^\C$ follows from taking the determinants of both sides of the pentagon equation
\[
\alpha_{R_f,R_g, R_h\ot R_i} \circ \alpha_{R_f \ot R_g, R_h,  R_i} =
(\id_{R_f} \ot \alpha_{R_g, R_h, R_i}) \circ \alpha_{R_f,R_g \ot R_h, R_i}  \circ (\alpha_{R_f,R_g, R_h} \ot  \id_{R_i}),
\]
where $f,g,h,i\in G$ and using \eqref{RfRg}. 

Let $F:\C\rightarrow \C'$ be a grading preserving tensor equivalence with a tensor functor structure 
$\varphi_{X,Y}:F(X)\ot F(Y)\xrightarrow{\sim} F(X\ot Y)$. Let $R'_g=F(R_g),\, g\in G,$ denote the homogeneous regular objects of $\C'$.
From the  definition of a tensor functor we have
\[
\varphi_{R_f, R_g\ot R_h} \circ (\id_{R_f} \ot \varphi_{R_g, R_h} ) \circ \alpha_{R'_f,R'_g,R'_h} 
=
F(\alpha_{R_f,R_g,R_h}) \circ  \varphi_{R_f \ot R_g, R_h} \circ ( \varphi_{R_f,R_g} \ot \id_{R_h}),
\]
for all $f,g,h\in G$.  Taking determinants of both sides  of this equation   we get
\[
\frac{{\alpha}^\C(f,g,h)}{{\alpha}^{\C'}(f,g,h)}
=
\frac{\det({\alpha}_{{R}_f,{R}_g,{R}_h})^D}{\det({\alpha}_{{R}'_f,{R}'_g,{R}'_h})^D}
=
\frac{\det({\varphi}_{{R}_f,{R}_{gh}})^{D^2}\det({\varphi}_{{R}_g, R_h})^{D^2}}{\det({\varphi}_{{R}_{fg},{R}_h})^{D^2}
\det(\varphi_{{R}_f,{R}_g})^{D^2}},
\]
i.e., ${\alpha}^\C$ and ${\alpha}^{\C'}$ are cohomologous.
\end{proof}

\begin{example}
Let $G$ be a finite group, $\alpha \in Z^3(G,\, k^\times)$ be a $3$-cocycle, and $N \subset G$ be a normal subgroup.
We can view $\C=\Vec_G^\alpha$ as a $G/N$-graded fusion category. The corresponding $3$-cocycle 
$\alpha^\C\in Z^3(G/N,\, k^\times)$ is given by 
\[
\alpha^\C(fN, gN, hN) = \displaystyle\prod_{x\in fN, y\in gN, z\in hN}\, \alpha(x,\, y,\, z)^{|N|}.
\]
This construction gives rise  to a group homomorphism $H^3(G,\, k^\times) \to H^3(G/N,\, k^\times)$.
\end{example} 

\begin{remark}
Given a $G$-graded fusion category $\C=\oplus_{g\in G}\,\C_g$ and a $3$-cocycle $\omega \in Z^3(G,\, k^\times)$  one 
constructs a new  $G$-graded fusion category $\C(\omega)$ by multiplying the associativity constraint on homogeneous
objects by values of $\omega$ \cite{ENO2}.  From our definition,  $\alpha^{\C(\omega)} =  \alpha^\C \omega^{D^4}$. 
\end{remark}


Let $\C=\displaystyle \oplus_{g\in G}\, \C_g$ be an integral  braided $G$-crossed fusion category 
with structure isomorphisms $\mu$, $\gamma$, 
and $c$, as defined in ~\eqref{mu and gamma} and ~\eqref{crossed braiding}. 
Consider the following functions: 
\begin{eqnarray}
\label{mubar}\mu^\C_g(x,y) &=& \det({\mu}_g(R_x,R_y))^{D^{2}},\\
\label{gammabar}\gamma^\C_{g,h}(x) &=& \det({\gamma}_{g,h}(R_x))^{D^{3}},\\
\label{cbar}{c}^\C_{g,x} &=& \det({c}_{R_g,R_x})^{D^{2}}.
\end{eqnarray}
It follows from axioms  \eqref{G-hex0} - \eqref{G-hex2}
that they define a braided $G$-crossed category structure on $\Vec_G^{{\alpha}^\C}$.

\begin{remark}
\label{Dets are roots of 1}
By Lemma~\ref{rootof1lemma}, there is $r:G^2\to k^\times$ such that the values of $\alpha^\C d(r)$ are $|G|$th roots of $1$. Choose
a function $t:G^2\to k^\times$ such that $t^D=r$. We can replace $\C$ by an equivalent braided $G$-crossed fusion category $\C'$
with the associativity constraint $\alpha'_{X,Y,Z} = d(t)(f,g,h)\, \alpha_{X,Y,Z}$ for $X\in \C_f,\, Y\in \C_g,\, Z\in \C_h,\, f,g,h\in G,$
so that $\alpha^{\C'}=\alpha^\C d(r)$. Thus, we may assume that the values of $\alpha^{\C}$ are $|G|$th roots of $1$.
Similarly, by Proposition~\ref{VecGw Gcrossequiv} and Corollary~\ref{VecGw Gcrossunity} we may assume
that the values of functions \eqref{mubar} - \eqref{cbar} are $|G|$th roots of~$1$.
\end{remark}


For $x,y,g,h\in G$ consider the composition
\begin{equation}
\label{sigmaghxy}
\sigma^{g,h}_{R_x,R_y} : g(R_x)\ot h(R_y) \xrightarrow{c_{ R_{gxg^{-1}}, R_{hyh^{-1}}}} gxg^{-1}\circ h(R_y) \ot g(R_x)
\xrightarrow{\gamma_{gxg^{-1}, h}(R_y) \ot \id} gxg^{-1}h(R_y) \ot g(R_x).
\end{equation}
viewed as an automorphism  of $D\,R_{gxg^{-1}hyh^{-1}}$. These compositions 
are the components of the braiding on the induced object of $\C^G$, see \eqref{ICXICY}.


\begin{corollary}
\label{blocks are roots of 1}
$\C$ is equivalent to a braided $G$-crossed fusion category in which 
\[
\det \left({\sigma}^{g,h}_{R_x,R_y} \right)^{|G|D^2} =1
\]
for all $x,y,g,h\in G$.
\end{corollary}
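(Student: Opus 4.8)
The plan is to compute the determinant of the composition $\sigma^{g,h}_{R_x,R_y}$ defined in \eqref{sigmaghxy} directly in terms of the functions $c^\C$ and $\gamma^\C$ introduced in \eqref{gammabar} and \eqref{cbar}, and then invoke Remark~\ref{Dets are roots of 1}. Since $\sigma^{g,h}_{R_x,R_y}$ is a composition of $c_{R_{gxg^{-1}}, R_{hyh^{-1}}}$ with $\gamma_{gxg^{-1},h}(R_y)\otimes \id_{g(R_x)}$, property (i) of Proposition~\ref{pasha's} gives
\[
\det\left(\sigma^{g,h}_{R_x,R_y}\right) = \det\left(c_{R_{gxg^{-1}}, R_{hyh^{-1}}}\right)\cdot \det\left(\gamma_{gxg^{-1},h}(R_y)\otimes \id_{g(R_x)}\right).
\]
By property (iv) of Proposition~\ref{pasha's}, the second factor equals $\det\left(\gamma_{gxg^{-1},h}(R_y)\right)^{d(g(R_x))} = \det\left(\gamma_{gxg^{-1},h}(R_y)\right)^{D}$, since $d(g(R_x)) = d(R_x) = D$ as $g$ is an autoequivalence.

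Raising to the power $D^2$ and using the definitions \eqref{gammabar}, \eqref{cbar}, I would obtain
\[
\det\left(\sigma^{g,h}_{R_x,R_y}\right)^{D^2} = c^\C_{gxg^{-1},\,hyh^{-1}}\cdot \gamma^\C_{gxg^{-1},\,h}(y),
\]
after matching powers of $D$ (the $c$-term carries $D^2$ directly, the $\gamma$-term needs $D\cdot D^2 = D^3$, which is exactly the exponent in \eqref{gammabar}). Then raising to the power $|G|$ gives
\[
\det\left(\sigma^{g,h}_{R_x,R_y}\right)^{|G|D^2} = \left(c^\C_{gxg^{-1},\,hyh^{-1}}\right)^{|G|}\cdot \left(\gamma^\C_{gxg^{-1},\,h}(y)\right)^{|G|}.
\]
By Remark~\ref{Dets are roots of 1}, after replacing $\C$ by an equivalent braided $G$-crossed fusion category we may assume that the values of the functions \eqref{mubar}--\eqref{cbar} are $|G|$th roots of unity, hence both factors on the right are equal to $1$, which is the desired conclusion.

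The main thing to be careful about — rather than a genuine obstacle — is bookkeeping of the Frobenius-Perron dimensions and the exponents of $D$. One needs that $d(R_{gxg^{-1}}) = d(R_x) = D$, that the conjugated objects $R_{gxg^{-1}}$ are again the homogeneous regular objects of their components (which holds because $g$ is a tensor autoequivalence of $\C$ permuting the graded components, so $g(R_x)$ and $R_{gxg^{-1}}$ have the same class, and we may take $\C_{reg}$ skeletal as arranged before Proposition~\ref{alpha C invariant}), and that tensoring by $\id_{g(R_x)}$ contributes the factor $D$ in the exponent via Proposition~\ref{pasha's}(iv). Everything else is a mechanical application of the multiplicativity properties of determinants together with the already-established fact that \eqref{mubar}--\eqref{cbar} define a braided $G$-crossed structure on $\Vec_G^{\alpha^\C}$, so that no compatibility needs to be rechecked.
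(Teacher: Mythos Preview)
Your proof is correct and is exactly the argument the paper leaves implicit: the corollary is stated without proof, as an immediate consequence of the definitions \eqref{gammabar}--\eqref{cbar} together with Remark~\ref{Dets are roots of 1}, and your computation spelling out $\det(\sigma^{g,h}_{R_x,R_y})^{D^2}=c^\C_{gxg^{-1},hyh^{-1}}\,\gamma^\C_{gxg^{-1},h}(y)$ via Proposition~\ref{pasha's}(i),(iv) is precisely how one unpacks this. The bookkeeping you flag (identifying $g(R_x)$ with $R_{gxg^{-1}}$ under the skeletal convention, and tracking the exponents of $D$) is handled correctly.
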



\section{Braid groups and their representations from braided fusion categories}
\label{sect braid}


Let $n\geq 1$ be an integer. The {\em braid group}  $B_n$ is generated by  elementary braids $\sigma_1,\dots \sigma_{n-1}$
satisfying  the relations $\sigma_i \sigma_{i+1} \sigma_i =  \sigma_{i+1} \sigma_i  \sigma_{i+1}$ for all $i=1,\dots, n-2,$
and $\sigma_i \sigma_j = \sigma_j \sigma_i$ for all $i,j$ such that $|i-j|>1$. The kernel of the surjective homomorphism
from $B_n$ to the symmetric group $S_n$ is  the {\em pure braid group} $P_n$. 

Let $\B$ be a braided fusion category.  For objects $X_1,X_2,\dots,X_n$ of  $\B$ let $X_1 \ot X_2 \ot  \cdots \ot X_n$
denote the tensor product $(\cdots (X_1 \ot X_2)\ot X_3)\ot \cdots \ot X_{n-1})\ot X_n$.  In particular, 
$X^{\ot n}$ will denote the tensor product of $n$ copies of $X$ as above.  

There is a homomorphism
\begin{equation}
\label{pb action}
\rho_X: B_n \to  \Aut_\B( X^{\ot n}),
\end{equation}
defined by 
\begin{multline}
\label{el braid  action}
\rho_X(\sigma_i) : X^{\ot n} \xrightarrow{\alpha} (X^{\ot (i-1)}  \ot (X\ot X)) \ot X^{\ot (n-i-1)}  \\
\xrightarrow{(\id_{X^{\ot (i-1)}}  \ot c_{X, X} )\ot  \id_{X^{\ot (n-i-1)}}} (X^{\ot (i-1)}  \ot (X\ot X)) \ot X^{\ot (n-i-1)} 
\xrightarrow{\alpha^{-1}} X^{\ot n},
\end{multline}
for all $i=1,\dots, n-1$.  Here $\alpha$ denotes the  appropriate composition of associativity isomorphisms of $\B$.
The braid group relations for $\rho_X(\sigma_i)$ follow from the hexagon axioms of the braiding  and the MacLane 
coherence theorem.

The following definition was given by Naidu and Rowell in \cite{NR}.

\begin{definition}
A braided fusion category $\B$ has {\em property (F)} if for every $n\geq 1$ the image of 
\eqref{pb action} is finite for every $X\in \B$.
\end{definition}

\begin{remark}
This property  is equivalent to the image of the homomorphism 
\[
\rho_{X_1,\dots, X_n}: P_n \to \Aut_\B(X_1 \ot X_2 \ot  \cdots \ot X_n)
\] 
defined similarly to \eqref{el braid  action} being finite for all objects $X_1,X_2,\dots,X_n$ in $\B$.
\end{remark}

It was shown in \cite{ERW} that group-theoretical braided fusion categories have property (F).
It was conjectured in \cite{NR} that a braided fusion category $\B$ has property (F) if and only if 
it is weakly integral.


\section{The fiber product of braided $G$-crossed fusion categories}
\label{sect fiber}

The following notion was introduced in \cite{Ni}.

\begin{definition}
 Let $\C^1,\, \C^2$ be fusion categories graded by the same group $G$.  
 The {\em fiber product} of $\C^1$ and $\C^2$ is the  fusion category
 \begin{equation}
 \label{fiber product}
 \C^1 \bt_G \C^2 = \bigoplus_{g\in G}\, \C^1_g \bt \C^2_g. 
 \end{equation}
 \end{definition}
 
Here $\bt$ denotes Deligne's tensor product of Abelian categories. 
Clearly, $\C^1 \bt_G \C^2$ is a fusion subcategory of $\C^1 \bt \C^2$ graded by $G$. 
The trivial component of this grading is $\C^1_e\bt \C^2_e$.

\begin{remark}
Note that neither $\C^1$ nor $\C^2$ are fusion subcategories of $\C^1 \bt_G \C^2$ in general. 
\end{remark}

Suppose that $\C^1,\, \C^2$ are braided $G$-crossed fusion categories. The actions of $G$
on $\C^1$ and $\C^2$ give rise to an action of $G$ on $\C^1 \bt_G \C^2$:
\[
g(X_1\bt X_2) = g(X_1) \bt g(X_2),\qquad X_1\in \C^1_x,\, X_2\in \C^2_x,\, g\in G.  
\]
Similarly, the $G$-crossed braidings $c^1$ and $c^2$ of $\C^1$ and $\C^2$ can be combined to define a $G$-crossed braiding
on $\C^1 \bt_G \C^2$ :
\[
c^{}_{X_1\bt X_2, Y_1\bt Y_2} = c^1_{X_1, Y_1} \bt c^2_{X_2, Y_2}.
\]


%

Recall that a Tannakian subcategory $\E$ of a non-degenerate braided fusion category $\B$ is {\em Lagrangian}
if $\FPdim(\E)^2=\FPdim(\B)$.

\begin{lemma}
\label{Lagr}
Let $\C =\displaystyle\oplus_{g\in G}\, \C_g$ be a non-degenerate braided $G$-crossed fusion category.
Suppose that $\C_e$ contains a $G$-stable Lagrangian subcategory. Then
$\C^G$ is equivalent to the representation category of a twisted group double. In particular,
$\C^G$ is group-theoretical. 
\end{lemma}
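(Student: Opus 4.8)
The plan is to produce a Lagrangian subcategory of $\C^G$ and then invoke the classification of non-degenerate braided fusion categories that contain one. Write $D=\FPdim(\C_e)$. Since the grading of $\C$ is faithful, every homogeneous component of $\C$ has Frobenius--Perron dimension $D$, so $\FPdim(\C)=|G|\,D$ and hence $\FPdim(\C^G)=|G|\,\FPdim(\C)=|G|^2D$. Let $\L\subseteq\C_e$ be the given $G$-stable Lagrangian subcategory, so $\FPdim(\L)^2=\FPdim(\C_e)=D$ and $\L\cong\Rep(K)$ as a symmetric fusion category for some finite group $K$. Because $\L$ is $G$-stable, the action of $G$ on $\C$ restricts to $\L$, and the inclusions $\L\hookrightarrow\C_e\hookrightarrow\C$ are $G$-equivariant; equivariantizing gives a fully faithful tensor functor $\L^G\hookrightarrow(\C_e)^G\hookrightarrow\C^G$, so that $\L^G$ is a fusion subcategory of $\C^G$ with $\FPdim(\L^G)^2=|G|^2\,\FPdim(\L)^2=|G|^2D=\FPdim(\C^G)$.

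The next step is to show that $\L^G$ is Tannakian. By the note after the definition of non-degeneracy, $\C^G$ is a non-degenerate braided fusion category. Since $G$ acts on the braided category $\C_e$ by braided autoequivalences, it acts on the symmetric category $\L$ by symmetric autoequivalences, and the forgetful functor $\L^G\to\L$ is a braided tensor functor. Composing it with the canonical fiber functor $\Rep(K)\to\Vec$ gives a braided tensor functor $\L^G\to\Vec$, which is faithful (as is any tensor functor out of a fusion category); hence $\L^G$ is symmetric, and being a symmetric fusion category equipped with a symmetric fiber functor it is Tannakian. Together with the dimension computation above, $\L^G$ is a Lagrangian subcategory of $\C^G$.

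Finally, a non-degenerate braided fusion category that contains a Lagrangian subcategory is braided equivalent to $\Rep(D^\alpha(H))$ for some finite group $H$ and $\alpha\in Z^3(H,k^\times)$ \cite{DGNO}; applied to $\C^G$ this gives the first assertion. Since $\Rep(D^\alpha(H))=\Z(\Vec_H^\alpha)$ is the center of a pointed, hence group-theoretical, fusion category, it is group-theoretical, giving the last claim. I expect the Tannakian step to be the main obstacle: a symmetric fusion subcategory of $\C^G$ need not be Tannakian (it could be super-Tannakian), so one genuinely has to exhibit a fiber functor, which the forgetful functor $\L^G\to\L$ supplies; alternatively one could unpack \cite{DGNO} directly by de-equivariantizing $\C^G$ along $\L^G$ and checking that the resulting braided crossed category has trivial component $\Vec$.
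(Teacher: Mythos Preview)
Your proof is correct and follows the same route as the paper's: show that $\L^G$ (the paper writes $\E^G$) is a Lagrangian subcategory of the non-degenerate braided fusion category $\C^G$, then invoke \cite[Theorem~4.64]{DGNO}. The paper compresses this into a single sentence, while you spell out the dimension count and supply an argument for the Tannakian step (the fiber functor $\L^G\to\L\to\Vec$) that the paper leaves implicit in the word ``Lagrangian.''
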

\begin{proof}
Let $\E$ be a $G$-stable Lagrangian subcategory of $\C_e$. Then $\E^G$ is 
a Lagrangian subcategory of $\C^G$ and the statement follows from
\cite[Theorem 4.64]{DGNO}.
\end{proof}


\begin{proposition}
\label{when Ce pointed}
Let $\C =\displaystyle\oplus_{g\in G}\, \C_g$ be a non-degenerate braided $G$-crossed fusion category
such that $\C_e$ is pointed. Then $(\C \bt_G \C^\rev)^G$ is group-theoretical. 
\end{proposition}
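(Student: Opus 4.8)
\textbf{Proof strategy for Proposition~\ref{when Ce pointed}.}
The plan is to apply Lemma~\ref{Lagr} to the braided $G$-crossed fusion category $\C \bt_G \C^\rev$. So the first task is to verify that $\C \bt_G \C^\rev$ is non-degenerate. Its trivial component is $\C_e \bt \C_e^\rev$, and since $\C$ is non-degenerate, $\C_e$ is a non-degenerate (hence modular, being integral and pointed) braided fusion category; the Deligne product of a non-degenerate braided fusion category with its reverse is again non-degenerate, and the grading is faithful since that of $\C$ is. So $\C \bt_G \C^\rev$ is a non-degenerate braided $G$-crossed fusion category with pointed trivial component $\C_e \bt \C_e^\rev$.

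\textbf{The Lagrangian subcategory.}
The heart of the argument is to exhibit a $G$-stable Lagrangian subcategory of $\C_e \bt \C_e^\rev$. The natural candidate is the ``diagonal'': for the pointed non-degenerate braided fusion category $\C_e$ with underlying group $A = \O(\C_e)$, the category $\C_e \bt \C_e^\rev$ is pointed with underlying group $A \times A$ equipped with the quadratic form $q \oplus q^{-1}$ (where $q$ is the quadratic form classifying $\C_e$). The diagonal subgroup $\Delta A = \{(a,a) : a \in A\}$ is isotropic for $q \oplus q^{-1}$, has order $|A|$, and satisfies $|\Delta A|^2 = |A|^2 = \FPdim(\C_e \bt \C_e^\rev)$, so the corresponding fusion subcategory $\Delta$ is Lagrangian. (One should be slightly careful that the associativity $3$-cocycle on $\Delta A$ is trivial in cohomology, so that $\Delta \simeq \Rep(\widehat{\Delta A})$ is genuinely Tannakian; this follows because $\Delta A$ is an isotropic subgroup, so the restriction of the abelian $3$-cocycle to it is a coboundary.) Finally I must check that $\Delta$ is $G$-stable: the $G$-action on $\C_e \bt \C_e^\rev$ is the diagonal action $g(X_1 \bt X_2) = g(X_1) \bt g(X_2)$, and since $g$ acts on the first and second factors by the ``same'' braided autoequivalence (up to the reversal, which does not change the underlying group or the action on objects), it preserves the diagonal subgroup $\Delta A \subset A \times A$, hence preserves $\Delta$.

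\textbf{Conclusion.}
Once $\Delta$ is identified as a $G$-stable Lagrangian subcategory of $(\C \bt_G \C^\rev)_e$, Lemma~\ref{Lagr} applies and shows that $(\C \bt_G \C^\rev)^G$ is equivalent to the representation category of a twisted group double, hence group-theoretical. The main obstacle is the careful bookkeeping in the Lagrangian step: making precise the identification of $\C_e \bt \C_e^\rev$ with a pointed category of the form $\Vec_{A\times A}^{\omega}$ whose quadratic form is hyperbolic on the diagonal, and checking that $G$-stability of $\Delta A$ at the level of groups lifts to $G$-stability of the fusion subcategory $\Delta$ (i.e. that the $G$-action can be chosen to fix $\Delta$ not just setwise on objects but compatibly with its Tannakian structure). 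Proposition~\ref{Crev} may be invoked to handle the reverse-category bookkeeping cleanly, relating $(\C^G)^\rev$ to $(\C^\rev)^G$ so that the diagonal construction can be phrased entirely in terms of $\C^G$ and its reverse if that is more convenient.
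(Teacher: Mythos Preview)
Your proposal is correct and follows essentially the same approach as the paper: the paper's proof simply states that the subcategory of $\C_e \bt \C_e^{\rev}$ spanned by $\{X \bt X \mid X \in \O(\C_e)\}$ is Lagrangian and $G$-stable, then invokes Lemma~\ref{Lagr}. You have supplied the details (non-degeneracy, isotropy of the diagonal, the dimension count, and $G$-stability under the diagonal action) that the paper leaves implicit, but the argument is the same.
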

\begin{proof}
The subcategory of $\C_e \bt \C_e^\rev$ spanned by $\{X\bt X \mid X\in \O(\C_e) \}$ is Lagrangian and $G$-stable.  
So the result follows from Lemma~\ref{Lagr}.
\end{proof}


\section{The finiteness result}
\label{sect finiteness}

Fix $n\geq 1$. Let $G$ be a finite group.
The following action  of $B_n$ on the set $G^{2n}$ was considered in \cite[Section 4.5]{ERW}:
\begin{multline}
\label{permutation pi}
\pi_{n, G}(\sigma_i)  \left( (g_1,h_1), \dots,  (g_n, h_n) \right)  \\
= \left(  (g_1,h_1), \dots,  (g_{i-1}, h_{i-1}), (g_{i}h_{i}g_{i}^{-1}g_{i+1}, h_{i+1}),  (g_{i}, h_{i}), (g_{i+2}, h_{i+2}), \dots,  (g_n, h_n) \right),
\end{multline}
for all $g_j,h_j\in G,\, j=1,\dots, n$. Let $K_{n,G}$ denote the kernel of $\pi_{n, G}$. 

Clearly, $K_{n,G} \subset P_n$ and $[B_n: K_{n,G}]<\infty$.


\begin{proposition}
\label{CGD}
Let $\C=\oplus_{g\in G}\, \C_g$ and  $\D=\oplus_{g\in G}\, \D_g$ be integral braided $G$-crossed  fusion categories.
If $(\C \bt_G \D)^G$ has property (F) then so do $\C^G$ and $\D^G$.
\end{proposition}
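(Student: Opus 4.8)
The plan is to exhibit $\C^G$ (and, symmetrically, $\D^G$) as receiving braid group representations that are subrepresentations — after restriction to the finite-index subgroup $K_{n,G} \subset B_n$ — of braid group representations coming from $(\C \bt_G \D)^G$. First I would fix an object $X$ of $\C^G$ and analyze the representation $\rho_X: B_n \to \Aut_{\C^G}(X^{\ot n})$. Decomposing $X = \bigoplus_{g\in G} X_g$ with respect to the $G$-grading of $\C$, the tensor powers $X^{\ot n}$ break up into summands indexed by tuples $(g_1,\dots,g_n) \in G^n$ (together with the equivariant structure data), and the braiding formula~\eqref{braiding from equiv} for $\C^G$ shows that $\rho_X(\sigma_i)$ permutes these summands according to exactly the kind of rule appearing in~\eqref{permutation pi}. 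Restricting to $K_{n,G}$ — the kernel of $\pi_{n,G}$ — we get a representation that preserves each of these homogeneous summands.

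Next I would pick an object $Y$ of $\D^G$ whose grading-decomposition $Y = \bigoplus_{g\in G} Y_g$ has the property that every $Y_g$ is nonzero (for instance take $Y$ to be the induced object $I_\D(\mathbf{1}) = \bigoplus_{g\in G} g(\mathbf{1})$, or a sum of induced objects of all the $R_g$'s, so that all graded components are faithfully present), and consider the object $Z = X \bt Y$ (suitably interpreted as an object of $(\C \bt_G \D)^G$, using that $\C \bt_G \D$ is the $G$-graded fiber product and that the equivariant structures combine). The key computation is that the braiding of $(\C \bt_G \D)^G$ on $Z^{\ot n}$ is, summand by summand, the tensor product of the $\C^G$-braiding on the corresponding summand of $X^{\ot n}$ with the $\D^G$-braiding on the corresponding summand of $Y^{\ot n}$; this follows from the formula $c_{X_1 \bt X_2,\, Y_1 \bt Y_2} = c^1_{X_1,Y_1} \bt c^2_{X_2,Y_2}$ for the fiber-product crossed braiding together with~\eqref{braiding from equiv}. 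Consequently, on the $K_{n,G}$-fixed block structure, $\rho_Z|_{K_{n,G}}$ contains $\rho_X|_{K_{n,G}} \ot (\text{something nonzero})$ as a direct summand — so the image of $\rho_X|_{K_{n,G}}$ is a quotient of (a subgroup of a block of) the image of $\rho_Z|_{K_{n,G}}$, hence finite by hypothesis. Since $[B_n : K_{n,G}] < \infty$, finiteness of $\rho_X|_{K_{n,G}}$ forces finiteness of $\rho_X(B_n)$; as $X$ and $n$ were arbitrary, $\C^G$ has property (F), and the same argument with the roles of $\C$ and $\D$ swapped gives property (F) for $\D^G$.

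The main obstacle I expect is bookkeeping the equivariant structures correctly: one must check that $X \bt Y$, built from $X \in \C^G$ and $Y \in \D^G$, genuinely defines an object of $(\C \bt_G \D)^G$ and that its tensor powers and the associated $B_n$-action decompose as claimed — in particular that the associativity isomorphisms (which also enter~\eqref{el braid action} through $\alpha$) respect the block decomposition indexed by $G^n$, and that passing to $K_{n,G}$ really does stabilize each block rather than merely permuting blocks within an orbit. Once the block decomposition is set up, the essential point is cleanly that "$\rho_Z$ restricted to a block is $\rho_X$-on-a-block tensored with $\rho_Y$-on-a-block," and finiteness propagates because a group with a finite-index subgroup mapping to a finite group, via a representation that is a summand-wise tensor factor of a finite-image representation, has finite image.
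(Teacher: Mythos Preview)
Your overall architecture is right --- restrict to the finite-index subgroup $K_{n,G}$, decompose into blocks, and use that on each block the $(\C\bt_G\D)^G$-representation factors as a Kronecker product of the $\C^G$- and $\D^G$-representations. This is exactly what the paper does.

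But there is a genuine gap in your final step. You write that ``the image of $\rho_X|_{K_{n,G}}$ is a quotient of (a subgroup of a block of) the image of $\rho_Z|_{K_{n,G}}$.'' This is not true: from an operator of the form $A\ot_k B$ you can recover $A$ only up to a scalar, so there is no well-defined homomorphism from $\rho_Z(K_{n,G})$ onto $\rho_X(K_{n,G})$. What you actually have is a surjection from the diagonal group $\Delta=\{(\rho_X(\sigma),\rho_Y(\sigma))\mid\sigma\in K_{n,G}\}$ onto $\rho_Z(K_{n,G})$, and its kernel consists of pairs $(\lambda\,\id,\lambda^{-1}\,\id)$. Nothing you have said bounds these scalars: a priori the kernel could be an infinite subgroup of $k^\times$, in which case $\Delta$ (and hence $\rho_X(K_{n,G})$, which is a quotient of $\Delta$) could be infinite even though $\rho_Z(K_{n,G})$ is finite. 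A toy model: if $\rho_X(\sigma)=\lambda$ and $\rho_Y(\sigma)=\lambda^{-1}$ on $\mathbb{Z}$ then $\rho_X\ot\rho_Y$ is trivial while $\rho_X$ has infinite image whenever $\lambda$ is not a root of unity.

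The paper closes this gap with the determinant machinery of Section~\ref{sect det}: it works with the induced regular objects (so that blocks are tensor products of the $g(R_h)$'s), and Corollary~\ref{blocks are roots of 1} guarantees that the determinant of each block of $\rho_{I_\C(R)}(\sigma_i)$ is an $N$th root of unity for a fixed $N=|G|D^2$. Hence if some $\sigma\in K_{n,G}$ acts on a block as $\lambda\,\id$, taking determinants gives $\lambda^{D^n}$ a root of unity of bounded order, so $\lambda$ itself lies in a finite group. This is the missing ingredient in your argument; without it the passage from finiteness of $\rho_Z$ to finiteness of $\rho_X$ does not go through.
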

\begin{proof}
Consider the regular objects
\begin{eqnarray*}
R&=& \bigoplus_{h\in G}\, R_h,\quad\text{where} \quad R_h=\bigoplus_{X\in \O(\C_h)}\, d(X)\,X, \\  
S&=& \bigoplus_{h\in G}\, S_h,\quad\text{where} \quad S_h=\bigoplus_{X\in \O(\D_h)}\, d(X)\,X.
\end{eqnarray*}
Let us denote $Z_h = R_h\bt S_h$ and let $Z = \oplus_{h\in G}\,Z_h$ in $\C \bt_G \D$. 

Let $I_\C(R)\in \C^G,\, I_{\D}(S)\in \D^G$ be the induced objects \eqref{ICX}. 

Any  object of $\C^G$ (respectively, $\D^G$) is contained in a direct sum of copies of $I_\C(R)$ (respectively, $I_\D(S)$). 
So it suffices to show that the images of homomorphisms
\[
\rho_{I_\C(R)}: B_n \to \Aut_{\C^G}(I_\C(R)^{\ot n}) \quad \text{and} \quad
\rho_{I_{\D}(S)}: B_n \to \Aut_{\D^G}(I_{\D}(S)^{\ot n}) 
\] 
defined in  \eqref{pb action}  are finite.
Since $K_{n,G}$  is a finite index subgroup of $B_n$, it is enough 
to check that  $\rho_{I_\C(R)}({K_{n,G}})$  and  $\rho_{I_\D(S)}({K_{n,G}})$ are finite.

By  the hypothesis, the image of  
\begin{equation}
\label{big rep}
\rho_{I_{\C\bt_G \D }(Z)}: B_n \to \End_{(\C\bt_G\D)^G}(I_{\C\bt_G \D }(Z)^{\ot n})
= \End_{(\C\bt_G\D)^G} \left( \bigoplus_{\substack{g_1,\dots, g_n\in G \\  h_1,\dots, h_n\in G}} \, g_1(Z_{h_1}) \ot \cdots \ot g_n(Z_{h_n}) \right)
\end{equation}
is finite. For each elementary braid $\sigma_i\in B_n,\, i=1,\dots,n-1,$ the automorphism
$\rho_{I_{\C\bt_G \D }(Z)}(\sigma_i)$ maps the summand 
$
g_1(Z_{h_1}) \ot \cdots  \ot g_i(Z_{h_i})  \ot g_{i+1}(Z_{h_{i+1}})   \ot \cdots  \ot  g_n(Z_{h_n})
$
in the direct sum in \eqref{big rep} to
$
g_1(Z_{h_1}) \ot \cdots  \ot  g_{i}h_{i}g_{i}^{-1}g_{i+1}(Z_{h_{i+1}})   \ot  g_i(Z_{h_i})  \cdots  \ot g_n(Z_{h_n}), 
$
for all $g_1,\dots,g_n \in G$ and $h_1,\dots,h_n \in G$. 
Note that these summands are, in general, objects of $\C\bt_G \D$, but not of $(\C\bt_G \D)^G$.  
Comparing with \eqref{permutation pi} we see that 
each of them is stable under $K_{n,G}$.  Let
\begin{equation}
\label{restricted rho}
\rho_{I_{\C\bt_G \D }(Z)}^{(g_1,h_1),\dots, (g_n,h_n)}:
K_{n,G}\to \End_{\C\bt_G \D }(g_1(Z_{h_1}) \ot \cdots \ot g_n(Z_{h_n})). 
\end{equation}
denote the corresponding restrictions  of $\rho_{I_{\C\bt_G \D }(Z)}$.

We have 
\begin{multline}
\End_{\C\bt_G \D }(g_1(Z_{h_1}) \ot \cdots \ot g_n(Z_{h_n})) \\
= \End_\C(g_1(R_{h_1}) \ot \cdots \ot g_n(R_{h_n})) \ot_k  \End_{\D}(g_1(S_{h_1}) \ot \cdots \ot g_n(S_{h_n}))
\end{multline}
and
\begin{equation}
\label{factorization rho}
\rho_{I_{\C\bt_G \D }(Z)}^{(g_1,h_1),\dots, (g_n,h_n)}(\sigma) =
\rho_{I_{\C}(R)}^{(g_1,h_1),\dots, (g_n,h_n)}(\sigma) \ot_k \rho_{I_{\D }(S)}^{(g_1,h_1),\dots, (g_n,h_n)} (\sigma),\qquad \sigma\in K_{n,G}.
\end{equation}
Let 
\[
\Delta(K_{n,G}) =\left\{ \left(\rho_{I_{\C}(R)}^{(g_1,h_1),\dots, (g_n,h_n)}(\sigma),\, \rho_{I_{\D }(S)}^{(g_1,h_1),\dots, (g_n,h_n)} (\sigma) \right) \mid
\sigma \in K_{n,G} \right\} 
\]
be the diagonal subgroup of $\rho_{I_{\C}(R)}^{(g_1,h_1),\dots, (g_n,h_n)}(K_{n,G}) \times \rho_{I_{\D }(S)}^{(g_1,h_1),\dots, (g_n,h_n)} (K_{n,G})$.
We claim that $\Delta(K_{n,G})$ is finite.

Equation  \eqref{factorization rho} defines a surjective group homomorphism
\begin{equation}
\label{hom p}
p: \Delta(K_{n,G}) \to \rho_{I_{\C\bt_G \D }(Z)}^{(g_1,h_1),\dots, (g_n,h_n)}(K_{n,G}).
\end{equation}

Note that $\rho_{I_{\C\bt_G \D }(Z)}^{(g_1,h_1),\dots, (g_n,h_n)}(K_{n,G})$  is finite
since $(\C\bt_G \D)^G$ has property (F). 
The kernel of $p$ is contained in  $L = \{ (\lambda\, \id_{g_1(R_{h_1}) \ot \cdots \ot g_n(R_{h_n})},\, 
\lambda^{-1}\,\id_{g_1(S_{h_1}) \ot \cdots \ot g_n(S_{h_n})} ) \mid \lambda \in k^\times  \} \cap  \Delta(K_{n,G})$.  
Indeed, the Kronecker product of linear operators equals the  identity if and only if the factors are reciprocal scalar
multiples of the identity. Thus, it suffices to prove that $L$ is finite.

For  $\sigma\in K_{n,G}$  let $\sigma =\sigma_{i_1}\sigma_{i_2}\cdots \sigma_{i_m}$ be its presentation as a 
product of elementary braids.  By the definition \eqref{ICXICY} of braiding on the induced object, each 
$\rho_{I_{\C}(R)}(\sigma_{i_k}),\, k=1,\dots,m,$ is a block permutation matrix whose blocks 
are obtained by tensoring maps of the form \eqref{sigmaghxy} with identity and conjugating by the  associativity isomorphisms.
By Corollary~\ref{blocks are roots of 1} the determinant  
of each block of $\rho_{I_{\C}(R)}(\sigma_{i_k})$  is an $N$th root of unity, where $N=|G|D^2$. 

Hence, the determinant of each diagonal block  
${\rho}_{I_{\C}({R})}^{(g_1,h_1),\dots, (g_n,h_n)}(\sigma)$ 
of ${\rho}_{I_{\C}({R})}(\sigma)$
is an $N$th root of unity.
But if  $\rho_{I_{\C}(R)}^{(g_1,h_1),\dots, (g_n,h_n)}(\sigma)=\lambda \,\id_{g_1(R_{h_1}) \ot \cdots \ot g_n(R_{h_n})}$ then,
by Proposition~\ref{pasha's},
\[
\det\left({\rho}_{I_{\C}({R})}^{(g_1,h_1),\dots, (g_n,h_n)}(\sigma)\right) =\lambda^{D^n},
\]
where $D=\FPdim(\C_e)$. Hence, $\lambda^{|G|D^{n+2}} =1$ and $L$ is finite. So $\Delta(K_{n,G})$ is finite.

It follows that  the images $\rho_{I_{\C}(R)}^{(g_1,h_1),\dots, (g_n,h_n)}(K_{n,G})$   and $\rho_{I_{\D}(S)}^{(g_1,h_1),\dots, (g_n,h_n)}(K_{n,G})$ 
are finite for all $g_j,h_j\in G,\, j=1,\dots, n$. Hence, $\rho_{I_{\C}(R)}(K_{n,G})$  and  $\rho_{I_{\D}(S)}(K_{n,G})$ are finite.
\end{proof}

\begin{theorem}
\label{main thm}
Any weakly group-theoretical braided fusion category $\B$ has property (F).
\end{theorem}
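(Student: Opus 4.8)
The plan is to reduce the statement for a general weakly group-theoretical braided fusion category $\B$ to the already-proven case of group-theoretical categories, by passing through the center and using the machinery assembled in the preceding sections. First I would recall that $\B$ has property (F) if and only if its Drinfeld center $\Z(\B)$ does. One direction is automatic since $\B$ embeds in $\Z(\B)$ as a fusion subcategory; for the converse, every object $X\in\B$ gives rise to an object in $\Z(\B)$ (e.g. via the induction/adjoint functor), and the braid group representation on $X^{\ot n}$ in $\B$ is a subrepresentation of (or otherwise controlled by) the one attached to the induced object in $\Z(\B)$, so finiteness transfers. Thus it suffices to prove $\Z(\B)$ has property (F), and $\Z(\B)$ is a non-degenerate braided fusion category which is again weakly group-theoretical by \cite{ENO3}.

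Next I would invoke Proposition~\ref{Natale's}: since $\B$ is weakly group-theoretical, there is a braided $G$-crossed fusion category $\C=\oplus_{g\in G}\C_g$ with $\C_e$ pointed and $\Z(\B)\cong\C^G$. Moreover $\Z(\B)$ is non-degenerate and integral (weakly integral and modular), so $\C$ is a non-degenerate integral braided $G$-crossed fusion category with pointed trivial component. Now apply Proposition~\ref{when Ce pointed}: $(\C\bt_G\C^\rev)^G$ is group-theoretical, hence has property (F) by \cite{ERW}. Here I would also need that $\C^\rev$ is an integral braided $G$-crossed fusion category (immediate, since the reverse construction only alters tensor product and braiding by composing with $G$-action, leaving Frobenius-Perron dimensions unchanged) so that $\C\bt_G\C^\rev$ falls under the hypotheses of Proposition~\ref{CGD}.

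Finally, I would apply Proposition~\ref{CGD} with $\D=\C^\rev$: since $(\C\bt_G\C^\rev)^G$ has property (F), so does $\C^G\cong\Z(\B)$. Combining with the first paragraph's reduction, $\B$ has property (F), which is the assertion. I expect the main obstacle to be the very first step — the equivalence ``$\B$ has property (F) iff $\Z(\B)$ does'', and more precisely the ``if'' direction — because one must set up carefully how the braid group representation on $X^{\ot n}$ sits inside the one on $(I(X))^{\ot n}$ for a suitable lift $I(X)\in\Z(\B)$, using that the forgetful functor $\Z(\B)\to\B$ is dominant (every object of $\B$ is a summand of the image of some object of $\Z(\B)$) and is a braided-type functor intertwining the relevant automorphisms; once this is in place, everything else is an orchestration of results already proved in the paper. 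A secondary point to verify is that the integrality hypotheses needed for Proposition~\ref{CGD} genuinely hold, i.e. that $\Z(\B)$ and hence $\C$ are integral — this follows from $\B$ being weakly integral together with \cite[Proposition 8.27]{ENO1} applied to the non-degenerate weakly integral category $\Z(\B)$, but it should be spelled out.
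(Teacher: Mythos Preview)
Your argument is correct when $\B$ is \emph{integral}, and there it matches the paper's proof almost verbatim: embed $\B$ into $\Z(\B)$ (which is integral because $\B$ is), invoke Proposition~\ref{Natale's}, Proposition~\ref{when Ce pointed}, \cite{ERW}, and Proposition~\ref{CGD}. Note also that only the implication ``$\Z(\B)$ has (F) $\Rightarrow$ $\B$ has (F)'' is needed, and this is immediate from the braided embedding $\B\hookrightarrow\Z(\B)$; there is no need to argue about induction or dominance.

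The genuine gap is in the non-integral case. Your claim that $\Z(\B)$ is integral is false: \cite[Proposition~8.27]{ENO1} (which is Proposition~\ref{GN grading}(i) here) only says that $\FPdim(X)^2\in\mathbb{Z}$ for a weakly integral category, not that $\FPdim(X)\in\mathbb{Z}$. Concretely, take $\B$ to be an Ising category or any $\TY(A,\chi,\tau)$: then $\Z(\B)$ contains simple objects of dimension $\sqrt{|A|}$ (see Section~\ref{sect TY}), so $\Z(\B)$ is not integral, and hence neither is the de-equivariantization $\C$. Since Proposition~\ref{CGD} is stated and proved only for \emph{integral} $\C,\D$ (the determinant argument via Corollary~\ref{blocks are roots of 1} needs integrality), your chain of implications breaks at that point.

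The paper closes this gap with an additional reduction you are missing: for arbitrary weakly group-theoretical $\B$, one uses the canonical grading of Corollary~\ref{dimension grading} by the elementary abelian $2$-group $G(\B)$ and forms the fiber square $\tilde\B=\B\bt_{G(\B)}\B$, which \emph{is} integral (dimensions of simple objects are products $\FPdim(X)\FPdim(Y)$ with $X,Y$ in the same component, hence integers). By the integral case already established, $\tilde\B$ has property~(F). One then runs a Kronecker-product/kernel argument parallel to the proof of Proposition~\ref{CGD}, but now over $P_n$ rather than $K_{n,G}$, to deduce property~(F) for $\B$; the finiteness of the scalar kernel here is obtained not from Corollary~\ref{blocks are roots of 1} (unavailable without integrality) but from Etingof's Vafa-type result \cite{E}, which bounds the orders of the images of $\sigma_i^2$ in the abelianization.
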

\begin{proof}

Let us first prove this theorem in the case when $\B$ is integral.
Note  that $\B$ embeds into its center $\Z(\B)$ which is also integral and weakly group-theoretical by \cite{ENO3}.
So it suffices to prove that $\Z(\B)$ has property (F). By Proposition~\ref{Natale's} we have 
$\Z(\B) =\C^G$, where $\C$ is a braided $G$-crossed fusion category 
with the pointed trivial component. Then $(\C \bt_G \C^\rev)^G$ is group-theoretical by Proposition~\ref{when Ce pointed}.
So the statement follows from  Proposition~\ref{CGD} and \cite{ERW}.

Now suppose that $\B$ is arbitrary and let 
\[
\B= \bigoplus_{a\in G(\B)}\, \B_a
\]
be the grading from Corollary~\ref{dimension grading}. The fiber square $\tilde{\B}= \B\bt_{G(\B)} \B$ is an integral weakly group-theoretical
braided fusion category and so has property (F).  Let $a_1,\dots,a_n \in G(\B)$,  let  $X_i,\, Y_i\in \B_{a_i}$, 
and let $Z_i =X_i \bt Y_i,\, i=1,\dots,n$.  
As in the proof of Proposition~\ref{CGD}, we have
\[
\End_{\tilde{\B}}(Z_1\ot \cdots Z_n) = \End_\B(X_1\ot \cdots \ot X_n) \ot_k \End_\B(Y_1\ot \cdots \ot Y_n),
\]
\[
\rho_{Z_1,\dots, Z_n}(\sigma) = \rho_{X_1,\dots, X_n}(\sigma) \ot_k \rho_{Y_1,\dots, Y_n}(\sigma),\qquad \sigma\in P_n.  
\]
Let 
\[
\Delta(P_n) =\left\{  \left( \rho_{X_1,\dots, X_n}(\sigma) ,\, \rho_{Y_1,\dots, Y_n}(\sigma) \right)\mid \sigma \in P_n \right\}.
\]
To prove that $\Delta(P_n)$ is a finite group we consider a surjective homomorphism
\begin{equation}
\label{rZ=rXrY}
p: \Delta(P_n)  \to \rho_{Z_1,\dots, Z_n}(P_n) : 
( \rho_{X_1,\dots, X_n}(\sigma) ,\, \rho_{Y_1,\dots, Y_n}(\sigma) )  \mapsto \rho_{Z_1,\dots, Z_n}(\sigma).
\end{equation}
Its  kernel  is contained in $L =\{  (\lambda\,\id_{X_1\ot \cdots \ot X_n},\, \lambda^{-1}\,\id_{Y_1\ot \cdots \ot Y_n} ) \mid \lambda\in k^\times \}
\cap \Delta(P_n)$.
Since $\rho_{Z_1,\dots, Z_n}(P_n)$ is finite it suffices to prove that $L$ is finite.

Let $I = \rho_{X_1,\dots, X_n}(P_n)$.  We claim that there are only finitely many scalar automorphisms in $I$. 
Note that the abelian group $I/[I,\,I]$ is finite. Indeed, it is generated by  the images of finitely many conjugates
of pure elementary braids  $\sigma_i^2,\, i=1,\dots n$ \cite{KT}.  These have finite order by \cite{E}.  
Let $N$ be the exponent of $I/[I,\,I]$. If $s=\lambda\, \id_{X_1\ot \cdots \ot X_n} \in I$ then $s^N \in [I,\,I]$
Hence, $\det(s^N)=1 $, i.e., $\lambda^{Nd(X_1)\cdots d(X_n)} =1$. 
So $L$ is finite and the theorem is proved.
\end{proof}


\section{Example: Braid group representations from the centers of Tambara-Yamagami fusion categories}
\label{sect TY}

Recall that a {\em Tambara-Yamagami} fusion category \cite{TY} is a $\mathbb{Z}/2\mathbb{Z}$-graded category whose 
trivial component is pointed and the non-trivial component has rank $1$. Such categories are classified up to a
tensor equivalence by triples $(A,\, \chi,\tau)$, where $A$ is a finite Abelian group, $\chi:A\times A \to k^\times$ is a
non-degenerate symmetric bicharacter, and $\tau$ is a square root of $|A|^{-1}$ in $k$.  Let $\TY(A,\,\chi,\,\tau)$
denote the corresponding fusion category. We denote its invertible objects by $a\in A$ and the non-invertible
simple object by $m$. 

The category $\TY(A,\, \chi,\tau)$ is weakly group-theoretical, but not group-theoretical or integral in general.  Its  center
$\B=\Z(\TY(A,\chi,\tau))$ was studied in detail in \cite{GNN}. The category $\B$ is also $\mathbb{Z}/2\mathbb{Z}$-graded 
with a group-theoretical trivial component. The non-trivial component of $\B$ is spanned by simple objects of 
the Frobenius-Perron dimension $\sqrt{|A|}$. These  are equal to $m$ as objects of $\TY(A,\,\chi,\,\tau)$ and 
are parametrized by pairs  $(q,\,\Delta)$ where $q:A\rightarrow k^{\times}$ 
is a  function satisfying $q(a+b)\chi(a,b)=q(a)q(b)$ for all $a,b\in A$ and $\Delta$ is a scalar such that 
$\Delta^{2}=\tau\sum_{a\in A}q(a)^{-1}$. 

For simplicity, assume that $q$ is a quadratic form, i.e., $q(a)= q(-a)$ for all $a\in A$, and fix $\Delta$.  
Let $Z$ denote the corresponding object of $\B$.  

Let us describe the braid group representations $\rho_Z$ \eqref{pb action} coming from $Z$ (they have finite images
by Theorem~\ref{main thm}.

We have
\begin{equation}
Z^{\ot n} = 
\begin{cases}
\left(\bigoplus_{a\in A}\,m \right)^{\ot \frac{n-1}{2}} & \text{ if $n$ is odd}, \\
\left(\bigoplus_{a\in A}\, a \right)^{\ot \frac{n}{2}} & \text{ if $n$ is even}, 
\end{cases}
\end{equation}
as an object of  $\TY(A,\,\chi,\,\tau)$, 
so that $\End_\B(Z^{\ot n})$ is identified with the vector space $V_n$ 
of dimension $|A|^{(n-1)/2}$  (respectively,  $|A|^{n/2}$) whose basis 
is the set of   $\frac{n-1}{2}$-tuples (respectively, $\frac{n}{2}$-tuples) of elements of $A$ when $n$ is odd
(respectively, even). Note that $V_{2m}\cong  V_{2m-1}$ for all $m\geq 1$. 

For any $n$ let $\tilde{\sigma}_{i}= \rho_Z(\sigma_i),\, i=1,\dots, n-1$.

\begin{proposition}
\label{TY rep}
Let $Z$ be a simple object of $\B$ as described above. The images of the elementary  braids of 
$B_{2m+1}$ in $\Aut_{\B}(Z^{\ot (2m+1)})$  are
\begin{eqnarray*}
\widetilde{\sigma}_{2i-1}&:& (a_{1},\dots,a_{n})\mapsto \Delta  q(a_{i-1}-a_{i})^{-1}(a_{1},\dots,a_{n}),\qquad 1\leq i\leq n, \\
\widetilde{\sigma}_{2i} &:& (a_{1},\dots,a_{n})\mapsto  \tau\Delta^{3} \sum_{b\in A} q(a_{i}-b) (a_{1},\dots,a_{i-1},b,a_{i+1},\dots,a_{n}),\quad 1\leq i\leq n,
\end{eqnarray*}
where we set $a_{0}=0$.
\end{proposition}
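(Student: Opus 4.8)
The plan is to compute the elementary braid actions directly from the description of the half-braiding on the simple object $Z=(q,\Delta)$ in $\B=\Z(\TY(A,\chi,\tau))$ given in \cite{GNN}, and then propagate those formulas through the associativity isomorphisms of $\TY(A,\chi,\tau)$. Concretely, $Z$ equals $m$ as an object of $\TY(A,\chi,\tau)$, and its half-braiding $c_{Z,-}$ is determined by the data $q$ and $\Delta$: on the invertible objects $a\in A$ one has $c_{Z,a}\circ c_{a,Z} = q(a)\,\id$ (up to the normalization fixed by $\Delta$), and the composite $c_{Z,m}$ involves the Gauss-sum scalar $\Delta$ together with the Fourier kernel $q$. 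Since $Z^{\ot n}$ decomposes as in the stated formula — a tensor power of $\bigoplus_{a\in A} m$ when $n$ is odd and of $\bigoplus_{a\in A} a$ when $n$ is even — I would fix the basis of $\End_\B(Z^{\ot n})=V_n$ indexed by tuples $(a_1,\dots,a_\ell)$ recording the invertible "labels" appearing in the iterated fusion $m\otimes m \cong \bigoplus_{a} a$, and track how $\rho_Z(\sigma_i)$ acts on these labels.

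The key steps, in order: (1) recall from \cite[GNN]{GNN} the explicit half-braiding and the $F$-matrices (associativity isomorphisms) of $\TY(A,\chi,\tau)$ — the only nonzero ones being $F^{m,a,m}_m = \chi(a,\cdot)$-type phases and $F^{m,m,m}_m = \tau\,\chi(\cdot,\cdot)^{-1}$ the inverse Fourier matrix scaled by $\tau$; (2) for $\sigma_{2i-1}$ (a braiding of two adjacent invertible labels, equivalently a braiding $c_{m,m}$ restricted to the $a_i$-isotypic summand sitting between labels $a_{i-1}$ and $a_i$), observe that conjugating $c_{m,m}$ by the relevant $F$-moves produces the scalar $\Delta\, q(a_{i-1}-a_i)^{-1}$, using the convention $a_0=0$ for the leftmost strand; (3) for $\sigma_{2i}$ (a braiding involving the $m$-strand, which changes the label $a_i$) apply the hexagon together with $F^{m,m,m}_m$ to get the Fourier-type sum $\tau\Delta^3\sum_{b\in A} q(a_i-b)$ replacing label $a_i$ by $b$; (4) check that these matrices indeed satisfy the braid relations (this is automatic from the hexagon axioms but serves as a sanity check on signs and normalizations); (5) note $V_{2m}\cong V_{2m-1}$ so it suffices to state the odd case $B_{2m+1}$.

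The main obstacle I expect is bookkeeping of the normalization constants — in particular, disentangling which powers of $\Delta$ and $\tau$ appear. The scalar $\Delta$ satisfies $\Delta^2=\tau\sum_{a}q(a)^{-1}$, a Gauss sum, and the Fourier matrix $F^{m,m,m}_m$ contributes factors of $\tau$; the $\Delta^3$ in $\widetilde\sigma_{2i}$ versus $\Delta^1$ in $\widetilde\sigma_{2i-1}$ comes from how many $m$-valence-$3$ vertices are involved when one rewrites the braiding in the standard fusion-tree basis. Getting the exact placement of $q$ versus $q^{-1}$ and the sign inside $q(a_{i-1}-a_i)$ (using $q(a)=q(-a)$ for quadratic $q$) requires care but no deep idea. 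I would organize the computation by first handling $n=2$ and $n=3$ by hand — where $Z^{\ot 2}\cong\bigoplus_a a$ and $Z^{\ot 3}\cong(\bigoplus_a m)$ — extract the single-generator formulas, and then argue the general-$n$ formulas follow because each $\sigma_i$ only involves the local fusion data of three consecutive strands, with the $F$-moves needed to bring $\sigma_i$ into local form contributing only the already-accounted phases. Finiteness of the image is not part of what needs proving here, as it is already guaranteed by Theorem~\ref{main thm}.
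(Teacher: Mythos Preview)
Your proposal is correct and matches the paper's own proof, which consists of the single sentence ``These are direct computations using explicit formulas for the braiding isomorphisms of $\B$ from \cite{GNN}.'' You have simply spelled out in detail the computation the paper leaves implicit: pulling the half-braiding data $(q,\Delta)$ and the associators of $\TY(A,\chi,\tau)$ from \cite{GNN}, choosing the fusion-tree basis indexed by tuples $(a_1,\dots,a_\ell)$, and tracking the local action of each $\sigma_i$ through the relevant $F$-moves.
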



\begin{proof}
These are direct computations using explicit formulas  for the  braiding isomorphisms of $\B$ from \cite{GNN}. 
\end{proof}

\begin{remark}
The image of the braid group representation  described in Proposition~\ref{TY rep} is finite by  Theorem ~\ref{main thm}.
Our formulas are equivalent to those in \cite{BW} for the images of mapping class groups arising from abelian anyon models.
\end{remark}

\bibliographystyle{ams-alpha}

\end{document}